\numberwithin{equation}{section}
\newtheorem{theorem}{Theorem}[section] 
\newtheorem{definition}[theorem]{Definition}
\newtheorem{proposition}[theorem]{Proposition}
\newtheorem{lemma}[theorem]{Lemma}
\newtheorem{remark}[theorem]{Remark}
\newcommand{\supp}{\operatorname{Supp}}
\renewcommand{\leq}{\leqslant}
\renewcommand{\geq}{\geqslant}
\numberwithin{equation}{section}
\DeclareFontFamily{U}{mathx}{\hyphenchar\font45}
\DeclareFontShape{U}{mathx}{m}{n}{
      <5> <6> <7> <8> <9> <10>
      <10.95> <12> <14.4> <17.28> <20.74> <24.88>
      mathx10
      }{}
\DeclareSymbolFont{mathx}{U}{mathx}{m}{n}
\DeclareMathSymbol{\bigplus}        {1}{mathx}{"90}
\DeclareMathSymbol{\bigtimes}       {1}{mathx}{"91}
\title{SHAP Values through General Fourier Representations: Theory and Applications}
\author[1]{Roberto Morales\footnote{roberto.morales@deusto.es}}
\affil[1]{\small Chair of Computational Mathematics, DeustoTech, University of Deusto, Avenida de las Universidades 24, 48007, Bilbao, Basque Country, Spain.}
\date{\today}
\begin{document}

\maketitle

\begin{abstract}
    This article establishes a rigorous spectral framework for the mathematical analysis of SHAP values. We show that any predictive model defined on a discrete or multi-valued input space admits a generalized Fourier expansion with respect to an orthonormal tensor-product basis constructed under a product probability measure. Within this setting, each SHAP attribution can be represented as a linear functional of the model's Fourier coefficients.

    Two complementary regimes are studied. In the deterministic regime, we derive quantitative stability estimates for SHAP values under Fourier truncation, showing that the attribution map is Lipschitz-continuous with respect to the $L^2(\mu)$-distance between predictors. In the probabilistic regime, we consider neural networks in their infinite-width limit and prove convergence of SHAP values toward those induced by the corresponding Gaussian process prior, with explicit error bounds in expectation and with high probability based on concentration inequalities.
    
    We also provide a numerical experiment on a clinical unbalanced dataset to validate the theoretical findings. 
\end{abstract}

\textbf{Keywords:} SHAP values; Fourier analysis; Sparse approximation; Gaussian Processes.

\textbf{MSC 2020:} 68T07; 42B10; 60G15; 65T50.   


\section{Introduction}

In recent decades, the rapid expansion of data-driven modeling has transformed the way complex systems are analyzed, predicted, and controlled. Advances in computational power, optimization algorithms, and statistical learning theory have made it possible to construct models capable of representing intricate nonlinear relationships in high-dimensional spaces. These developments have yielded remarkable predictive accuracy across diverse areas such as healthcare \cite{habehh2021machine}, \cite{saleem2020exploring}, finance \cite{dixon2020machine}, \cite{gao2024machine}, climate modeling \cite{chantry2021opportunities}, \cite{kashinath2021physics}, and natural language processing \cite{powers2012machine}, \cite{nagarhalli2021impact}. Yet, the very mechanisms that grant these models their expressive power also obscure the underlying reasoning that leads to a given output. As a consequence, the growth in model complexity has been accompanied by a corresponding decline in interpretability, posing a fundamental challenge to both theoretical understanding and practical deployment. 

This loss of transparency has elevated interpretability from a desirable feature to a scientific and ethical necessity. In safety-critical or socially sensitive contexts, the ability to justify a model's decisions is as important as its predictive accuracy. Regulatory frameworks have begun to reflect this shift. The European Union's General Data Protection Regulation (GDPR)\footnote{\url{https://gdpr.eu/}} , for instance, formally recognizes a right to explanation for individuals affected by automated decision-making systems \cite{sharma2019data}. Such demands for transparency have given rise to the field of Explainable Artificial Intelligence (XAI), where mathematical rigor and human interpretability must coexist \cite{hamon2022bridging}. 
  
Among the different approaches to XAI, the SHapley Additive exPlanations (SHAP) framework proposed by Lundberg and Lee \cite{lundberg2017unified} has become a cornerstone. SHAP attributes the output of a predictive model to its input features according to the cooperative-game-theory concept of Shapley Values \cite{shapley1953value}. Its axiomatic structure (efficiency, symmetry, dummy and additivity) provides a fair and theoretically consistent way of distributing the model's output among features. These properties have made SHAP one of the most widely adopted interpretability techniques in industry and research alike.

A complementary line of progress has emerged from Fourier and spectral analysis, which decomposes functions into components of different frequencies (see for instance \cite{gorji2023scalable} and \cite{cheraghchi2017nearly}). In the context of ML, this decomposition reveals how models capture patterns of varying smoothness or complexity, thus providing a natural language for discussing interpretability. Spectral analysis is closely related to the Frequency Principle (also called Spectral Bias), an empirical observation showing that neural networks (NNs) tend to learn low-frequency components of a target function before fitting its high-frequency details (see e.g. \cite{xu2019frequency} and \cite{rahaman2019spectral}). This principle connects training dynamics, generalization and smoothness: models generalize well when dominated by low-frequency components, which are also easier to interpret. 

\subsection{Motivation}
The central motivation of this work is to provide a rigorous spectral formulation of SHAP values for models defined on discrete spaces, where features may take more than two possible states. While Fourier-based interpretations of SHAP exist for binary variables (where the analysis relies on the Walsh-Hadamard basis \cite{gorji2024shap}), the general multi-valued case remains less explored. Many real-world data sets, however, involve categorical or ordinal attributes that cannot be faithfully represented as binary inputs.

Suppose that $h:\mathbb{R}^n \to \mathbb{R}$ is the predictive model trained by a ML algorithm. Given an input datum $x^*$ whose prediction we aim to interpret, we assign to each feature $i\in [n]:=\{1,\ldots, n\}$ a contribution value (i.e., its SHAP value) reflecting the marginal effect of including that feature in the predictive process.

Formally, consider a cooperative game with value function $v=v(S;x^*)$, where each subset $S\subseteq [n]$ represents a coalition of features and $v(h,x^*;S)$ denotes the expected model output when only the features in $S$ are available (see \cite{lee2017deep}). Then, the SHAP value associated with feature $i$ is defined as 

\begin{align}
    \label{intro:SHAP:formula}
    \phi_i(h;x^*)= \sum_{S\subseteq [n]\setminus \{i\}} \dfrac{|S|! (n-|S|-1)!}{n!} (v(h,x^*;S\cup \{i\}) - v(h,x^*,S)).
\end{align}

The expression \eqref{intro:SHAP:formula} computes a weighted average of the feature's marginal contributions across all possible coalitions that exclude it. In other words, $\phi_i$ quantifies how much the inclusion of feature $i$ changes the model's prediction on average, over all possible contexts of cooperation among the remaining features.

From \eqref{intro:SHAP:formula}, the exact computation of the SHAP values requires evaluating $v(h;S)$ for every subset $S\subseteq [n]$, leading to $2^{n-1}$ terms per feature and $\mathcal{O}(n2^n)$ evaluations overall. This exponential complexity makes the direct computation intractable for high-dimensional models.

To mitigate this issue, one may consider an approximate representation of the underlying model $h$. Let $h^{\text{app}}$ denote an approximation of $h$ obtained, for instance, by truncating its Fourier expansion to a prescribed range of frequencies. In this setting, it becomes natural to quantify the error introduced by such an approximation in terms of the corresponding SHAP values. Specifically, we aim to establish the existence of a constant $C>0$ depending on the architecture of the model, the frequencies considered in the approximation and the number of features such that 
\begin{align*}
    |\phi_i(h;x^*) - \phi_i(h^{\text{app}};x^*)|\leq C\quad \forall\, i\in [n].
\end{align*}

A satisfactory error bound can be obtained where features takes values in $\mathbb{R}$ using the classical Fourier Transform \cite{dautrayvol2} with explicit decay rates depending on the support of $h^{\text{app}}$. We refer to the Appendix \ref{section:appendix} for a precise statement of this result. However, the case where features takes discrete values is more challenging.

This paper addresses two main problems. First, we seek to generalize the Fourier representation of SHAP to discrete, multi-valued domains under product probability measures. Second, we investigate the stability of SHAP values when the model is approximated (either deterministically by truncating small Fourier coefficients, or probabilistically when finite neural network is replaced by its infinite-width Gaussian process limit).

\subsection{Methodology and main contributions}
The methodology combines functional analysis, probability, and sparse-approximation techniques. We construct a general orthonormal tensor-product basis $(\Psi_k)_{k\in \mathcal{I}}$ for the space $L^2(\mu)$, where $\mu$ is a product probability measure defined on the discrete input space. Any predictor $h:X\to \mathbb{R}$ can be expanded as
\begin{align*}
    h(x)=\sum_{k\in \mathcal{I}} \hat{h}(k) \Psi_k(x),
\end{align*}
which generalizes the classical Walsh-Hadamard expansion to non-binary features and non-uniform measures.

Within this framework, we prove that SHAP values can be written as linear combinations of the model's Fourier coefficients, with explicit combinatorial weights depending on feature interactions. This result forms a spectral decomposition of SHAP that connects cooperative-game theory and harmonic analysis. We then study how these spectral SHAP values vary when the predictor is simplified. Two complementary regimes are analyzed: (i) a deterministic regime, where we bound the change in SHAP values caused by removing high-frequency terms, and (ii) a probabilistic regime, where we approximate NN predictors by Gaussian processes and analyze convergence in the Wasserstein distance. 

The results of this article contribute to the mathematical understanding of interpretability in two fundamental directions. First, we introduce a unified spectral framework for SHAP, formulated in terms of Fourier expansions on product probability spaces. This construction accommodates general discrete and multi-valued features under arbitrary product measures, thereby extending the binary formulation of \cite{gorji2024shap} to a substantially broader class of settings.

Second, we establish both deterministic and probabilistic results. The deterministic analysis quantifies the variation of SHAP values when the underlying predictor is approximated by truncating its Fourier representation, leading to explicit error bounds expressed in terms of the $L^2(\mu)$-distance between the exact and truncated models. The probabilistic analysis, in turn, characterizes the asymptotic convergence of SHAP values associated with finite-width neural networks toward those corresponding to their infinite-width Gaussian process limits, measured via the Wasserstein distance between the induced output distributions.

The methodology developed in this work, which we term {\bf Fourier-SHAP}, extends the classical SHAP framework to a spectral setting. It allows for a unified interpretation of SHAP values in terms of Fourier coefficients under arbitrary product measures, providing both deterministic and probabilistic stability results.

\subsection{Related works}

The Fourier interpretation of SHAP values originates from the study of Boolean functions and sensitivity analysis. In \cite{gorji2024shap} the authors introduced sparse Fourier methods to compute SHAP efficiently by identifying the dominan spectral components. These works, however are limited to uniform binary variables, while practical data often involve multi-level categorical attributes and correlated distributions. The present paper generalizes these ideas to a multi-dimensional, non-binary framework.

At the same time, spectral analyses of NNs have deepened our understanding of learning dynamics. The Frequency Principle (see e.g. \cite{rahaman2019spectral} and  \cite{xu2019frequency}) shows that neural networks learn smoother, low-frequency structures first, a phenomenon consistent with good generalization. Connecting this principle with SHAP analysis provides a theoretical explanation for the observed stability of feature attributions. 

From a probabilistic perspective, the link between neural networks and Gaussian processes has a long history. The classical equivalence between infinite-width networks and GPs was established by Neal \cite{neal2012bayesian} and extended in \cite{lee2017deep}, \cite{lee2019wide}, and \cite{yang2019wide}. More recent contributions \cite{agrawal2020wide}, \cite{adams2024finite}, \cite{yaida2020non} and \cite{parhi2025random} have analyzed finite-width corrections, showing that realistic networks behave as mixtures or perturbations of GPs. These results provide the probabilistic background for our stability analysis of SHAP values, bridging deterministic Fourier approximations and stochastic neural-process behavior. 

Because of the cost of computing SHAP values, practical SHAP implementations rely on model-agnostic and model-specific algorithms. Kernel SHAP \cite{lundberg2017unified} is a model-agnostic estimator that samples coalitions and solves a locally weighted linear regression with the Shapley kernel; Deep SHAP adapts this idea to neural networks via DeepLIFT-style backpropagation rules; and Tree SHAP leverages tree structure to compute exact Shapley values for decision-tree ensembles in polynomial time. Contemporary overviews stress this taxonomy (model-agnostic vs model-specific) and highlight that Tree- and Deep-SHAP variants accelerate explanations while preserving SHAP's axioms under their respective model classes. They also note extensions such as SHAP interaction values for trees, which attribute pairwise effects in addition to main effects.

\subsection{Organization of the paper}
The paper is structured as follows. In Section \ref{section:mathematical:setting} we establish the mathematical framework required to extend SHAP analysis beyond the binary setting. Section \ref{section:SHAP:decomposition} presents the main theoretical results. Section \ref{section:numerical:experiments} reports numerical experiments that validate the theoretical findings. Finally Appendix \ref{section:appendix} develops the spectral-stability analysis of SHAP values for NNs in continuous domains, proving that high-frequency components have a negligible impact on the attributions. Appendix \ref{section:proof:of:second:main:result} contains the proofs of our main results. Finally, in Appendix \ref{Appendix:tables} auxiliary tables of our numerical experiment are presented.

\section{Mathematical Setting and Orthonormal Basis Construction}
\label{section:mathematical:setting}

In this section, we develop the functional-analytic framework that extends the Fourier representation of SHAP values beyond the binary setting. Our goal is to construct a general orthonormal tensor-product basis for discrete multi-valued features under arbitrary product probability measures and to express predictors as finite Fourier expansions within this space.

Let $n\in \mathbb{N}$ denote the number of input features. For each $i\in [n]$ consider a discrete feature 
\begin{align*}
    x_i\in \mathcal{Y}_i:=\{0,1,\ldots, d_i\},\quad m_i:=d_i+1,
\end{align*}
so that $\mathcal{Y}_i$ has $m_i$ possible states. The global input space is the Cartesian product
\begin{align*} 
    \mathcal{Y}:=\bigtimes_{i=1}^n \mathcal{Y}_i,\quad \text{ with }\quad |\mathcal{Y}|=\prod_{i=1}^n m_i.
\end{align*}

\begin{definition}
    Let $\mu_i$ be a probability measure on $\mathcal{Y}_i$ with full support, and define the product measure 
    \begin{align*}
        \mu:=\bigotimes_{i=1}^n \mu_i.
    \end{align*}

    Each coordinate random variable $X_i \sim \mu_i$ is independent under $\mu$. The associated expectation operator will be denoted by $\mathbb{E}_\mu[\cdot]$.
\end{definition}

For each coordinate space $\mathcal{Y}_i$, define the functional space 
\begin{align*}
    L^2(\mu_i):=\{f:\mathcal{Y}_i \to \mathbb{R}\},\quad \langle f,g \rangle_{L^2(\mu_i)}:=\sum_{x_i\in \mathcal{Y}_i} f(x_i)g(x_i)\mu_i(x_i),
\end{align*}
which is an $m_i$-dimensional Hilbert space. The global space 
\begin{align*}
    L^2(\mu):=\{h: \mathcal{Y}\to \mathbb{R}\},\quad \langle f,g \rangle_{L^2(\mu)}:=\sum_{x\in \mathcal{Y}} f(x)g(x)\mu(x), 
\end{align*}
is a finite-dimensional real Hilbert space of dimension $|\mathcal{Y}|$. Its associated norm is denoted by $\|\cdot\|_{L^2(\mu)}$.

Now, we are interested in the existence of orthonormal basis on the space $L^2(\mu)$. For each $i\in [n]$, we set an orthonormal basis $(\psi_{i,j})_{j=0}^{d_i}$ of $L^2(\mu_i)$ with the properties
\begin{align*}
    \psi_{i,0}=1,\quad \mathbb{E}_{\mu_i} [\psi_{i,j} (X_i)]=0\quad \forall j\in [d_i].
\end{align*}

Define the set of multi-indices as:
\begin{align*}
    \mathcal{I}:=\{k=(k_1,,\ldots, k_n): k_i \in \{0,\ldots, d_i\},\, i\in [n]\}.
\end{align*}

Now, we define $(\Psi_k)_{k\in \mathcal{I}}$ as follows:
\begin{align}
    \label{def:orthonormal:basis}
    \Psi_k(x):=\prod_{i=1}^n \psi_{i,k_i}(x_i),\quad k\in \mathcal{I},\quad x\in \mathcal{Y}.
\end{align}

We have the following result:

\begin{proposition}
    The family $\{\Psi_k\}_{k\in \mathcal{I}}$ is an orthonormal basis of $L^2(\mu)$.
\end{proposition}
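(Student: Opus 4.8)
The plan is to exploit the tensor-product structure induced by the product measure $\mu=\bigotimes_{i=1}^n\mu_i$ and reduce the statement to the orthonormality of each one-dimensional factor basis $(\psi_{i,j})_{j=0}^{d_i}$. First I would record the purely dimensional observation that $L^2(\mu)$ is a real Hilbert space of dimension $|\mathcal{Y}|=\prod_{i=1}^n m_i$, while the multi-index set $\mathcal{I}$ has cardinality $\prod_{i=1}^n m_i$ as well. Consequently it suffices to prove that $\{\Psi_k\}_{k\in\mathcal{I}}$ is an orthonormal family: an orthonormal family is automatically linearly independent, and a linearly independent family whose cardinality equals the dimension of the space must span it, hence is a basis.

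The core of the argument is the computation of $\langle\Psi_k,\Psi_l\rangle_{L^2(\mu)}$ for $k,l\in\mathcal{I}$. Using $\mu(x)=\prod_{i=1}^n\mu_i(x_i)$ together with the definition \eqref{def:orthonormal:basis}, the sum over the Cartesian product $\mathcal{Y}=\bigtimes_{i=1}^n\mathcal{Y}_i$ of the separable integrand factorizes:
\begin{align*}
    \langle\Psi_k,\Psi_l\rangle_{L^2(\mu)}
    =\sum_{x\in\mathcal{Y}}\prod_{i=1}^n\psi_{i,k_i}(x_i)\,\psi_{i,l_i}(x_i)\,\mu_i(x_i)
    =\prod_{i=1}^n\Big(\sum_{x_i\in\mathcal{Y}_i}\psi_{i,k_i}(x_i)\,\psi_{i,l_i}(x_i)\,\mu_i(x_i)\Big)
    =\prod_{i=1}^n\langle\psi_{i,k_i},\psi_{i,l_i}\rangle_{L^2(\mu_i)}.
\end{align*}
This ``discrete Fubini'' step, interchanging a sum over a Cartesian product with a product of one-dimensional sums of a separable term, is the only point that requires a (very short) justification, e.g. by induction on $n$ or direct expansion; it is entirely standard. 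Since each $(\psi_{i,j})_{j=0}^{d_i}$ is orthonormal in $L^2(\mu_i)$ by hypothesis, $\langle\psi_{i,k_i},\psi_{i,l_i}\rangle_{L^2(\mu_i)}=\delta_{k_i,l_i}$, so the product collapses to $\prod_{i=1}^n\delta_{k_i,l_i}=\delta_{k,l}$.

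Combining the two steps gives $\langle\Psi_k,\Psi_l\rangle_{L^2(\mu)}=\delta_{k,l}$, so the family is orthonormal, and the dimension count of the first paragraph upgrades this to an orthonormal basis. I do not expect a genuine obstacle: the result is essentially the classical fact that orthonormal bases of the factors of a tensor product of Hilbert spaces yield an orthonormal basis of the product, specialized to the identification $L^2(\mu)\cong\bigotimes_{i=1}^n L^2(\mu_i)$. The only places calling for mild care are writing the finite-sum factorization cleanly and, if one prefers not to invoke the dimension-counting shortcut, verifying completeness directly by expanding an arbitrary $h\in L^2(\mu)$ coordinate by coordinate against the $\psi_{i,j}$ and checking that the resulting coefficients are precisely the $\hat h(k)=\langle h,\Psi_k\rangle_{L^2(\mu)}$.
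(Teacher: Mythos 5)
Your proposal is correct and follows essentially the same route as the paper: factorize $\langle\Psi_k,\Psi_l\rangle_{L^2(\mu)}$ over the product measure to reduce to the one-dimensional orthonormality relations, then conclude completeness from the dimension count $|\mathcal{I}|=\prod_{i=1}^n m_i=\dim L^2(\mu)$. The only difference is that you spell out the factorization and counting steps in more detail than the paper does.
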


\begin{proof}
    Let $k,k'\in \mathcal{I}$. Then, by independence of coordinates, we have
    \begin{align*}
        \langle \Psi_k,\Psi_{k'} \rangle_{L^2(\mu)}= \prod_{i=1}^n \langle \psi_{i,k_i} \psi_{i,k_i'} \rangle_{L^2(\mu_i)} = \prod_{i=1}^n \delta_{k_i,k_i'}=\delta_{k,k'},
    \end{align*}
    i.e., orthonormality holds. Completeness follows because $|\mathcal{I}|= \prod_{i=1}^n m_i=\dim L^2(\mu)$.
\end{proof}

We note that this construction generalizes the Fourier-Walsh basis to non-binary, non-uniform domains. Each basis element $\Psi_k$ represents a joint oscillation pattern across features indexed by $k$.

Now every prediction $h\in L^2(\mu)$ admits a finite expansion
\begin{align}
    \label{decomposition:h:Fourier}
    h=\sum_{k\in \mathcal{I}} \hat{h}(k) \Psi_k,\quad \hat{h}(k)=\mathbb{E}_{\mu} [h(X)\Psi_k(X)].
\end{align}

The coefficients $\hat{h}(k)$ are the generalized Fourier coefficients of $h$. As a consequence, we have the Parseval's identity: For all $f,g\in L^2(\mu)$, we have
    \begin{align*}
    \langle f,g \rangle_{L^2(\mu)}=\sum_{k\in \mathcal{I}} \hat{f}(k)\hat{g}(k),\quad \|f\|_{L^2(\mu)}^2=\sum_{k\in \mathcal{I}} |\hat{f}(k)|^2.  
    \end{align*}

In order to describe how Fourier coefficients capture interactions among variables, we define the notion of the support of an index.

\begin{definition}
    For each multi-index $k$, we define the support of $k$ as
\begin{align}
    \label{def:supp:k}
    \supp(k):=\{i\,:\, k_i\neq 0\},\quad d(k):=|\supp(k)|.
\end{align}
The number $d(k)$ indicates the order of interaction represented by the coefficient $\hat{h}(k)$.
\end{definition}

\begin{remark}
One may decompose $\mathcal{I}$ by interaction order as
\begin{align}
    \label{decomposition:I}
    \mathcal{I}= \bigcup_{s=0}^n \mathcal{I}_s,\text{ where }\mathcal{I}_s :=\{k\in \mathcal{I}\,:\, d(k)=s\}.
\end{align}

Thus, $\mathcal{I}_0$ contains the constant term, $\mathcal{I}_1$ represents main-effect components, and $\mathcal{I}_s$ with $s>1$ capture higher-order feature interactions.
\end{remark}

The SHAP framework interprets feature attributions as values in a cooperative game in which the players are the features and the coalitions are subsets of features whose contribution to the model output can be measured by conditional expectations.

Fix an input data $x^*\in \mathcal{Y}$. For a subset $S\subseteq [n]$, let $x_S^*=(x_i^*)_{i\in S}$ denote the values of the features in $S$ and let $X_{[n]\setminus S}$ denote the random complementary features drawn from $\mu_{[n]\setminus S}=\bigotimes_{j\notin S} \mu_j$.

\begin{definition} 
Given a model $h\in L^2(\mu)$, we define the Coalitional value function as follows: 
\begin{align*}
    v_\mu(h;S):=\mathbb{E}_{\mu_{[n]\setminus S}} [h(x_S^*,X_{[n]\setminus S})],
\end{align*}
This is the expected model output when only the features in $S$ are fixed to their values in $x^*$. Now, for $i\in [n]$, its SHAP value is given by 
\begin{align}
    \label{SHAP:formula}
    \phi_i(h;x^*)=\sum_{S\subseteq [n]\setminus \{i\}} \dfrac{|S|! (n-|S|-1)!}{n!} (v_\mu (h; S\cup \{i\}) - v_\mu (h;S)).
\end{align}
\end{definition}

\section{Main results}
\label{section:SHAP:decomposition}
In this section, we state the main results of the paper. We begin with Theorem \ref{proposition:SHAP:Fourier}, which establishes a deterministic stability result for SHAP values under Fourier truncation. In this setting, the predictor $h$ is decomposed into an orthonormal basis $(\Phi_k)_{k\in \mathcal{I}}$, and a sparse approximation $h_{\mathcal{S}}$ is obtained by retaining only a subset $\mathcal{S}$ of the coefficients. This result quantifies how the SHAP values change when small-frequency or high-order interaction terms are discarded. The error bound depends explicitly on the residual energy $\|h-h_{\mathcal{S}}\|_{L^2(\mu)}$, showing that SHAP values vary smoothly with respect to the $L^2$-distance between the full and truncated models. This provides a purely functional-analytic and deterministic control on the sensitivity of SHAP with respect to model simplification.

The proofs of the Theorems \ref{proposition:SHAP:Fourier}, \ref{thm:error:analysis:02} and \ref{thm:error:analysis:03} are given in the Appendix \ref{section:proof:of:second:main:result}.

\subsection{Deterministic results: Spectral truncation and error analysis}

\begin{theorem}
    \label{proposition:SHAP:Fourier}
    Let $h\in L^2(\mu)$ be a predictor and $x^*\in \mathcal{Y}$ being the input data we are explaining. Moreover, we consider the orthonormal basis $(\Psi_k)_{k\in \mathcal{I}}$ defined in \eqref{def:orthonormal:basis}.
    \begin{itemize} 
    \item[(a)] The SHAP value \eqref{SHAP:formula} for the feature $i\in [n]$ can be represented in the following form:
    \begin{align}
        \label{Fourier:SHAP:formula}
        \phi_i(h;x^*)=\sum_{k\in \mathcal{I}} \bm{1}_{\{k_i \neq 0\}} \dfrac{\hat{h}(k)\Psi_k(x^*)}{d(k)},
    \end{align}
    where $d(k)$ defined in \eqref{def:supp:k}.
    \item[(b)] Let $\mathcal{S}\subset \mathcal{I}$ be a subset of Fourier indices defining the sparse approximation
    \begin{align*}
        h_S(x):=\sum_{k\in \mathcal{S}} \hat{h}(k)\Psi_k(x),\quad x\in \mathcal{Y},
    \end{align*}
    and we set $r_{\mathcal{S}}$ as $r_{\mathcal{S}}(x):= h(x)-h_{\mathcal{S}}(x)\quad x\in \mathcal{Y}$. We define the per-frequency weights 
    \begin{align*}
        w_k(i;x^*):=\dfrac{\bm{1}_{\{k_i\neq 0\}}}{d(k)} |\Psi_k(x^*)|,\quad k\in \mathcal{I}.
    \end{align*}
    
    Then, we have 
        \begin{align}
            \label{Thm:SHAP:01:result}
            |\phi_i(h;x^*) - \phi_i(h_{\mathcal{S}};x^*)|\leq \left( \sum_{k\notin \mathcal{S}} w_k(i;x^*)^2 \right)^{1/2} \|r_{\mathcal{S}}\|_{L^2(\mu)}.
        \end{align}   
    \end{itemize}
\end{theorem}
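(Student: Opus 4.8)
The plan is to exploit the linearity of the Shapley functional in the predictor. Since $h=\sum_{k\in\mathcal{I}}\hat h(k)\Psi_k$ is a finite sum and the coalitional value $v_\mu(\cdot;S)$ — and therefore $\phi_i(\cdot;x^*)$ from \eqref{SHAP:formula} — depends linearly on $h$, it suffices to compute $\phi_i(\Psi_k;x^*)$ for each basis element and then recombine. First I would evaluate the coalitional value of a single $\Psi_k$: because $\Psi_k(x)=\prod_{j=1}^n\psi_{j,k_j}(x_j)$ factorizes over coordinates and the complementary features are independent under $\mu$, we get $v_\mu(\Psi_k;S)=\prod_{j\in S}\psi_{j,k_j}(x_j^*)\,\prod_{j\notin S}\mathbb{E}_{\mu_j}[\psi_{j,k_j}(X_j)]$. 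Invoking the normalization $\psi_{j,0}\equiv 1$ and the vanishing-mean property $\mathbb{E}_{\mu_j}[\psi_{j,\ell}]=0$ for $\ell\geq 1$, the second product is $0$ unless $\supp(k)\subseteq S$, in which case it equals $1$ and the first product collapses to $\Psi_k(x^*)$. Hence $v_\mu(\Psi_k;S)=\Psi_k(x^*)\,\mathbf{1}_{\{\supp(k)\subseteq S\}}$.

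Next I would substitute this into \eqref{SHAP:formula}. The set function $S\mapsto\mathbf{1}_{\{\supp(k)\subseteq S\}}$ is exactly the unanimity game carried by the coalition $\supp(k)$, so $\Psi_k$ acts, up to the scalar $\Psi_k(x^*)$, as a multiple of a unanimity game; the classical value of unanimity games then gives $\phi_i(\Psi_k;x^*)=\Psi_k(x^*)/d(k)$ when $i\in\supp(k)$ and $0$ otherwise (note $d(k)\geq 1$ exactly on the terms that survive, so no division by zero occurs, and the constant index $k=0$ drops out because $k_i=0$ for all $i$). For a self-contained argument one verifies this directly: for $i\notin\supp(k)$ the telescoping increment $\mathbf{1}_{\{\supp(k)\subseteq S\cup\{i\}\}}-\mathbf{1}_{\{\supp(k)\subseteq S\}}$ is identically zero, while for $i\in\supp(k)$ the subtrahend indicator always vanishes and the surviving sum runs over $S$ with $\supp(k)\setminus\{i\}\subseteq S\subseteq[n]\setminus\{i\}$; grouping these coalitions by cardinality reduces the weighted sum to $\tfrac{(n-d(k))!(d(k)-1)!}{n!}\sum_{j=d(k)-1}^{\,n-1}\binom{j}{d(k)-1}$, which equals $1/d(k)$ by the hockey-stick identity. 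Multiplying by $\hat h(k)$ and summing over $k\in\mathcal{I}$ yields \eqref{Fourier:SHAP:formula}, proving (a).

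For part (b), linearity again gives $\phi_i(h;x^*)-\phi_i(h_{\mathcal{S}};x^*)=\phi_i(r_{\mathcal{S}};x^*)$, and since $\widehat{r_{\mathcal{S}}}(k)=\hat h(k)\mathbf{1}_{\{k\notin\mathcal{S}\}}$, applying \eqref{Fourier:SHAP:formula} to $r_{\mathcal{S}}$ gives $\phi_i(r_{\mathcal{S}};x^*)=\sum_{k\notin\mathcal{S}}\mathbf{1}_{\{k_i\neq 0\}}\,\tfrac{\hat h(k)\Psi_k(x^*)}{d(k)}$. Taking absolute values and bounding termwise by $\sum_{k\notin\mathcal{S}}w_k(i;x^*)\,|\hat h(k)|$, then applying the Cauchy--Schwarz inequality on the index set $\mathcal{I}\setminus\mathcal{S}$, and finally using Parseval's identity to identify $\sum_{k\notin\mathcal{S}}|\hat h(k)|^2=\|r_{\mathcal{S}}\|_{L^2(\mu)}^2$, produces \eqref{Thm:SHAP:01:result}.

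The main obstacle is the second step — recognizing the unanimity-game structure of $v_\mu(\Psi_k;\cdot)$ and, if one wants to avoid citing that fact, correctly collapsing the weighted coalition sum to a single binomial sum so that the hockey-stick identity applies; everything else is routine bookkeeping with linearity, the triangle inequality, Cauchy--Schwarz, and Parseval.
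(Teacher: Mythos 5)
Your proposal is correct and follows essentially the same route as the paper: linearity of the Shapley functional, the evaluation $v_\mu(\Psi_k;S)=\Psi_k(x^*)\bm{1}_{\{\supp(k)\subseteq S\}}$ via independence and the mean-zero property of the $\psi_{j,\ell}$, and then Cauchy--Schwarz plus Parseval for part (b). The only difference is cosmetic: you evaluate the combinatorial weight sum directly (unanimity-game value, or the hockey-stick identity), whereas the paper obtains the same $1/d(k)$ by the random-permutation symmetry argument; both computations are valid.
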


\begin{remark}
    Before going further, let us point out interesting facts about \eqref{Fourier:SHAP:formula} and \eqref{Thm:SHAP:01:result}.
    \begin{itemize}
    \item The formula \eqref{Fourier:SHAP:formula} reveals that using the Fourier approach, the exponential sum of the original formula of SHAP \eqref{SHAP:formula} vanishes when $h$ is sparse. In fact, suppose that for $\mathcal{S}\subset \mathcal{I}$ we choose only a few coefficients with interactions $d_{max}<n$. Then, the sparse approximation $h_{\mathcal{S}}$ in  \eqref{Fourier:SHAP:formula} can be written as 
    \begin{align*}
        \phi_i(h_{\mathcal{S}};x^*)=\sum_{s=0}^{d_{max}} \sum_{k\in \mathcal{I}_s \cap\mathcal{S} } \bm{1}_{\{k_i \neq 0\}} \dfrac{\hat{h}(k) \Psi_k(x^*)}{d(k)}.
    \end{align*}
    \item Using the decomposition \eqref{decomposition:I}, we can write the residual $r_{\mathcal{S}}$ as 
    \begin{align*}
        \|r_{\mathcal{S}}\|_{L^2(\mu)}:= \left(\sum_{s=0}^{n} \sum_{k\in \mathcal{I}_s\setminus \mathcal{S} } |\hat{h}(k)|^2\right)^{1/2}.
    \end{align*}

    Then, the inequality \eqref{Thm:SHAP:01:result} can be analyzed per interaction. Typically, lower-order terms (i.e., $s=1,2$) carry most of the interpretative weight of SHAP values, while high-order terms have small Fourier energy and negligible impact on feature attributions.
    \end{itemize}
\end{remark}

\subsection{Probabilistic results: Asymptotic convergence and Gaussian limits}

After establishing the deterministic stability properties of SHAP values under spectral truncation, we now turn to their probabilistic behavior. In this subsection, we analyze the behavior of SHAP values when the predictor is modeled as a random function drawn from a Gaussian process prior or arises as the infinite-width limit of a NN. 
\begin{definition} 
A Gaussian process (GP) with mean zero and kernel $K$ on $\mathcal{Y}$ is a jointly Gaussian family $H:=\{h(x):x\in \mathcal{Y}\}$ with 
\begin{align*}
    \mathbb{E}[h(x)]=0\text{ and } \mathbb{E}[h(x)h(y)]=K(x,y).
\end{align*}
In this case, we write $h\sim GP(0,K)$.
\end{definition}

Since $\mathcal{Y}$ is finite, we can identify $H=(h(x))_{x\in \mathcal{Y}}\in \mathbb{R}^{|\mathcal{Y}|}$ with covariance matrix $K$. Viewing $K$ as an operator on $L^2(\mu)$, i.e.,  
\begin{align*}
    (Kf)(x):=\sum_{y\in \mathcal{Y}} K(x,y) f(y)\mu(y),
\end{align*}
we see that $K$ is self-adjoint and positive. Moreover, if $(\Psi_k)_{k\in \mathcal{I}}$ diagonalizes $K\Psi_k =s_k \Psi_k$ with $s_k\geq 0$, then the Karhunen-Lo\`eve expansion (see e.g. \cite{williams2006gaussian}) reads
\begin{align*}
    h(x)=\sum_{k\in \mathcal{I}} \sqrt{s_k} Z_k \Psi_k(x),\quad Z_k\sim N(0,1),\text{ i.i.d.,} 
\end{align*}
so the coefficients $c_k :=\langle h,\Psi_k \rangle_{L^2(\mu)}$ are independent and $c_k \sim \mathcal{N}(0,s_k)$.



\begin{theorem}[Expected $L^2$ error under a Gaussian-process prior]
\label{thm:error:analysis:02}
Let $h\sim GP(0,K)$ on $\mathcal{Y}$ and fix an index set $\mathcal{S}\subset \mathcal{I}$. Let $P_{\mathcal{S}}$ be the orthogonal projector onto $span\{\Psi_k\,:\, k\in \mathcal{S}\}$, and define the residual $r_{\mathcal{S}}:=(I-P_{\mathcal{S}})h$. Then,
    \begin{itemize}
    \item We have
    \begin{align*}
        \mathbb{E}\|r_\mathcal{S}\|_{L^2(\mu)}^2 =tr((I-P_{\mathcal{S}})K).
    \end{align*}
    \item Assume that $\mathcal{K}$ is diagonal in the basis $(\Psi_k)_{k\in \mathcal{I}}$, i.e., $K\Psi_k =s_k \Psi_k$ with $s_k\geq 0$. Then, for any feature $i$ and instance $x^*$:
        \begin{align}
            \label{error:bound:E:SHAP}
            \mathbb{E}|\phi_i (h,x^*) - \phi_i(h_{\mathcal{S}};x^*)| \leq \left(\sum_{k\notin S} w_k(i;x^*)^2 \right)^{1/2} \left(\sum_{k\notin S} s_k \right)^{1/2}
        \end{align}

    \item Under the same assumptions as in (b), for any $0<\delta <1$, with probability at least $1-\delta$, we have
    \begin{align}
        \label{error:bound:prob:SHAP}
        |\phi_i(h;x^*) - \phi_i(h_\mathcal{S}; x^*)|
        \leq \left( \sum_{k\in \mathcal{S}} w_k(i;x^*)^2 \right)^{1/2} \sqrt{\Sigma_1 + 2\sqrt{\Sigma_2 \log \dfrac{2}{\delta}} + 2s_{max} \log \dfrac{2}{\delta}}, 
    \end{align}
    where
    \begin{align*}
        \Sigma_1:= \sum_{k\notin \mathcal{S}} s_k,\quad \Sigma_2:=\sum_{k\notin \mathcal{S}} s_k^2,\text{ and }s_{max}:=\max_{k\notin \mathcal{S}} s_k.
    \end{align*}
    \end{itemize}
\end{theorem}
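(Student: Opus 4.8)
The plan is to dispatch the three assertions in order, reducing each to a finite-dimensional Gaussian computation via the Karhunen-Lo\`eve expansion recorded just before the statement, while obtaining the pathwise control on the SHAP values directly from Theorem \ref{proposition:SHAP:Fourier}(b). For the trace identity, I would first note that $I-P_{\mathcal{S}}$ is a self-adjoint idempotent on $L^2(\mu)$, so that $\|r_{\mathcal{S}}\|_{L^2(\mu)}^2=\langle h,(I-P_{\mathcal{S}})h\rangle_{L^2(\mu)}$. Writing $h=\sum_{k\in\mathcal{I}}c_k\Psi_k$ with $c_k=\langle h,\Psi_k\rangle_{L^2(\mu)}$ gives $\|r_{\mathcal{S}}\|_{L^2(\mu)}^2=\sum_{k\notin\mathcal{S}}c_k^2$, and from $h\sim GP(0,K)$ together with bilinearity of the pairing one computes $\mathbb{E}[c_k^2]=\sum_{x,y\in\mathcal{Y}}\Psi_k(x)\Psi_k(y)K(x,y)\mu(x)\mu(y)=\langle\Psi_k,K\Psi_k\rangle_{L^2(\mu)}$. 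Summing over $k\notin\mathcal{S}$ and re-reading the result as $\sum_{k\in\mathcal{I}}\langle\Psi_k,(I-P_{\mathcal{S}})K\Psi_k\rangle_{L^2(\mu)}$ (again using self-adjointness of $I-P_{\mathcal{S}}$) identifies it with $\operatorname{tr}((I-P_{\mathcal{S}})K)$.

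For the expectation bound, I would apply Theorem \ref{proposition:SHAP:Fourier}(b) to each realization of $h$, taking $h_{\mathcal{S}}=P_{\mathcal{S}}h$ so that $r_{\mathcal{S}}=(I-P_{\mathcal{S}})h$ matches the statement: this yields $|\phi_i(h;x^*)-\phi_i(h_{\mathcal{S}};x^*)|\leq\bigl(\sum_{k\notin\mathcal{S}}w_k(i;x^*)^2\bigr)^{1/2}\|r_{\mathcal{S}}\|_{L^2(\mu)}$ pointwise in $\omega$. Taking expectations, bounding $\mathbb{E}\|r_{\mathcal{S}}\|_{L^2(\mu)}\leq(\mathbb{E}\|r_{\mathcal{S}}\|_{L^2(\mu)}^2)^{1/2}$ by Jensen, and using the first part under the diagonal hypothesis $\langle\Psi_k,K\Psi_k\rangle_{L^2(\mu)}=s_k$ to get $\mathbb{E}\|r_{\mathcal{S}}\|_{L^2(\mu)}^2=\sum_{k\notin\mathcal{S}}s_k$, gives exactly \eqref{error:bound:E:SHAP}.

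For the high-probability bound, the key observation is that under the diagonal hypothesis the Karhunen-Lo\`eve expansion gives $c_k=\sqrt{s_k}Z_k$ with $Z_k$ independent standard normal variables, so $\|r_{\mathcal{S}}\|_{L^2(\mu)}^2=\sum_{k\notin\mathcal{S}}s_kZ_k^2$ is a nonnegatively-weighted sum of independent $\chi^2_1$ variables. I would then invoke the Laurent--Massart deviation inequality for such sums: for every $t>0$, $\mathbb{P}\bigl(\sum_{k\notin\mathcal{S}}s_kZ_k^2\geq\Sigma_1+2\sqrt{\Sigma_2 t}+2s_{\max}t\bigr)\leq e^{-t}$. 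Choosing $t=\log(2/\delta)$ makes the exceptional probability at most $\delta/2\leq\delta$; on the complementary event, combining with the pathwise inequality from Theorem \ref{proposition:SHAP:Fourier}(b) and taking square roots yields \eqref{error:bound:prob:SHAP} (with the prefactor sum running over $k\notin\mathcal{S}$, consistently with parts (a)--(b)).

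The main obstacle is the concentration step: the quadratic form $\sum_{k\notin\mathcal{S}}s_kZ_k^2$ is heavy-tailed relative to a sub-Gaussian variable, so a plain Hoeffding/Bernstein bound for bounded summands is not available; one needs the Bernstein-type estimate for weighted chi-squared sums (equivalently Laurent--Massart, or Hanson--Wright specialized to a diagonal matrix), which is precisely what produces the mixed $\sqrt{\Sigma_2\log(2/\delta)}$ and $s_{\max}\log(2/\delta)$ terms. The remaining verifications are routine: checking that the one-sided tail form suffices (only an upper deviation on $\|r_{\mathcal{S}}\|_{L^2(\mu)}^2$ is needed), and confirming that with $t=\log(2/\delta)$ the conclusion indeed holds with probability at least $1-\delta$ as stated.
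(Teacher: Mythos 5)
Your proposal is correct and follows essentially the same route as the paper's proof: the trace identity via the Gaussian quadratic-form computation (the paper cites $\mathbb{E}(H^\top AH)=tr(A\Sigma)$ with $A=I-P_{\mathcal{S}}$, you compute coefficientwise in the basis, which is the same calculation), the expectation bound by applying Theorem \ref{proposition:SHAP:Fourier}(b) pathwise plus Jensen and the diagonal identity $\mathbb{E}\|r_{\mathcal{S}}\|_{L^2(\mu)}^2=\sum_{k\notin\mathcal{S}}s_k$, and the high-probability bound via Laurent--Massart applied to $\sum_{k\notin\mathcal{S}}s_kZ_k^2$ with $t=\log(2/\delta)$ (you use the one-sided tail where the paper uses the two-sided one, an immaterial difference). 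Your remark that the prefactor in \eqref{error:bound:prob:SHAP} should run over $k\notin\mathcal{S}$ is also consistent with the paper's own argument, which indeed produces that index set.
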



\begin{remark}
    Consider a fully-connected depth-$L$ network with hidden widths $n_1,\ldots, n_L$ and scalar output. Suppose that the preactivations satisfy
    \begin{align*}
        a^{(\ell)}(x)=W^{(\ell)}z^{(\ell -1)}(x) + b^{(\ell)},\quad z^{(\ell)}(x)=\sigma (a^{(\ell)}(x)),\quad \ell \in [L],
    \end{align*}
    with $z^{(0)}(x)=x$ (or a fixed feature map of $x$), activation $\sigma:\mathbb{R}\to \mathbb{R}$, and i.i.d. parameters initialized as 
    \begin{align*}
        W_{ij}^{(\ell)} \sim \mathcal{N} \left(0,\dfrac{\sigma_\omega^2}{n_{\ell-1}} \right),\quad b_i^{(\ell)} \sim \mathcal{N}(0,\sigma_b^2),\quad \text{independent across all }i,j,\ell.
    \end{align*}

    Assume $\sigma$ has finite second moment under Gaussians, and the usual variance-preserving scaling above. As $n_1,\ldots , n_L \to \infty$, the random function $h(x)=a^{(L)}(x)$ converges in finite-dimensional distributions to a zero-mean Gaussian process
    \begin{align*}
        h \sim GP (0,K_{NNGP}),
    \end{align*}
    where the kernel $K_{NNGP}$ is obtained by the standard layer-wise recursion:
    \begin{align*}
        \begin{cases} 
        K^{(0)}(x,y):=& \langle x,y  \rangle_{L^2(\mu)},\\
        K^{(\ell)}(x,y):=& \sigma_b^2 +\sigma_\omega^2 \mathbb{E}_{\mu} [\sigma (U) \sigma (V)],\quad \ell \in [L],
        \end{cases}
    \end{align*}
    with 
    \begin{align*}
        \left( 
            \begin{array}{c}
                 U\\V
            \end{array}
        \right) \sim \mathcal{N}  \left( 0, \left[ 
        \begin{array}{cc}
             K^{(\ell-1)} (x,x) & K^{(\ell-1)}(x,y)  \\
             K^{(\ell-1)}(y,x) & K^{(\ell-1)}(y,y) 
        \end{array}
        \right]
        \right),
    \end{align*}
    and $K_{NNGP}:=K^{(L)}$. Since $\mathcal{Y}$ is finite with measure $\mu$, we identify $K_{NNGP}$ with a positive semidefinite operator on $L^2(\mu)$.

    Under these assumptions, Theorem \ref{thm:error:analysis:02} can be applied to this case with $K=K_{NNGP}$.
\end{remark}

\begin{remark}
    The diagonalization of $K_{NNGP}$ in the basis $(\Psi_k)_{k\in \mathcal{I}}$ is not always true. It holds in important cases, e.g., when $K_{NNGP}$ is invariant under a group of which $(\Psi_k)_{k\in \mathcal{I}}$ are characters (convolutional kernels on product groups, kernels that depend only on Hamming distance on a hypercube \cite{hamming1950error}, etc). In general, if $K_{NNGP}$ does not diagonalize in $(\Psi_k)_{k\in \mathcal{I}}$, then part (a) still holds, while part (b) and (c) can be replaced by variants that use Hanson-Wright-type concentrations (see e.g. \cite{vershynin2018high}) for non-diagonal quadratic forms (with slightly different constants).
\end{remark}

\begin{remark}

In the infinite-width limit of fully connected networks, the Neural Tangent Kernel (NTK) (see e.g. \cite{jacot2018neural}) $\Theta^{(L)}_\infty$ exists and is deterministic at initialization. Therefore, Theorem~\ref{thm:error:analysis:02} can be applied to the NTK setting by taking $K=\Theta^{(L)}_\infty$. If, in addition, $\Theta^{(L)}_\infty$ diagonalizes in the basis $(\Psi_k)_{k\in I}$, the bounds \eqref{error:bound:E:SHAP}–\eqref{error:bound:prob:SHAP} hold with $s_k$ equal to the eigenvalues of the NTK operator. 

\end{remark}



Now, we wish to control the SHAP truncation error of a finite-width neural network predictor $h_N$ by relating it of its infinite-width (NNGP) limit $h$. To do this, we write the vectors of function values over the finite input space $\mathcal{Y}$ as
\begin{align}
    \label{def:HN:H:01}
    H_N:=(h_N(x))_{x\in \mathcal{Y}},\quad H:=(h(x))_{x\in \mathcal{Y}},
\end{align}
and equip $\mathbb{R}^{|\mathcal{Y}|}$ with the norm induced by $L^2(\mu)$. The statistical discrepancy between the laws of $H_N$ and $H$ is measured with the Wasserstein-2 distance
\begin{align}
    \label{def:epsilon:N:Wasserstein}
    \epsilon_N:=W_2^\mu (\mathcal{L}(H_N), \mathcal{L}(H))=\inf_{(U,V)\sim \pi} \left( \mathbb{E}_{(U,V)\sim \pi} \|U-V\|_{L^2(\mu)}^2 \right)^{1/2},
\end{align}
where the infimum runs over all couplings $\pi$ of the laws of $H_N$ and $H$, i.e., $\mathcal{L}(H_N)$ and $\mathcal{L}(H)$, respectively. Because $\mathcal{Y}$ is finite, an optimal coupling always exists and realizes the infimum (see for instance \cite{villani2021topics} and \cite{santambrogio2015optimal}). Intuitively, this coupling pairs each random finite-width function $h_N$ with a $GP$ draw $h$ so that, on average, they are as close as possible in $L^2(\mu)$.



\begin{theorem}
    \label{thm:error:analysis:03}
    Let $h_N$ be a predictor trained for a finite-width neural network and consider its infinite-width (NNGP) limit $h$. Define $H_N$ and $H$ as \eqref{def:HN:H:01} and for $\mathcal{S}\subset \mathcal{I}$, consider the sparse approximation $h_{N,\mathcal{S}}$ defined by
    \begin{align*}
        h_{N,\mathcal{S}}(x):= \sum_{k\in \mathcal{S}} \hat{h}_N(k) \Psi_k(x),\quad x\in \mathcal{Y}.
    \end{align*}
    
    Moreover, consider $\epsilon_N$ defined in \eqref{def:epsilon:N:Wasserstein}. Then, we have 
    \begin{align}
        \label{eq:thm:W:02}
        \mathbb{E} \left| \phi_i(h_N; x^*) - \phi_i(h_{N,\mathcal{S}}; x^*) \right| \leq \left(\sum_{k\notin \mathcal{S}} w_k(i;x^*)^2 \right)^{1/2} \left(  \mathbb{E}[\|r_\mathcal{S} (h)\|_{L^2(\mu)}] + \epsilon_N\right).
    \end{align}

    In particular, if the kernel $K$ is diagonal in $\{\Psi_k\}_{k\in \mathcal{I}}$ with eigenvalues $\{s_k\}_{k\in \mathcal{I}}$, then 
    \begin{align}
        \label{eq:thm:W:03}
        \mathbb{E} \left| \phi_i(h_N; x^*) - \phi_i(h_{N,\mathcal{S}}; x^*) \right| \leq \left(\sum_{k\notin \mathcal{S}} w_k(i;x^*)^2 \right)^{1/2} \left(  \sqrt{\sum_{k\notin S} s_k} + \epsilon_N\right).
    \end{align}
    
\end{theorem}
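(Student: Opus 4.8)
The plan is to reduce the statement to the deterministic estimate of Theorem \ref{proposition:SHAP:Fourier}(b) applied pathwise to the random predictor $h_N$, and then to transport the resulting residual norm from $h_N$ to its Gaussian-process limit $h$ by means of an optimal Wasserstein coupling.

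First I would note that, since $\{\Psi_k\}_{k\in\mathcal{I}}$ is orthonormal and $\hat h_N(k)=\langle h_N,\Psi_k\rangle_{L^2(\mu)}$, the sparse approximation $h_{N,\mathcal{S}}$ is exactly the orthogonal projection $P_{\mathcal{S}}h_N$, so the residual is $r_{\mathcal{S}}(h_N)=(I-P_{\mathcal{S}})h_N$. For each realization of the network weights, $h_N$ is a deterministic element of $L^2(\mu)$, so Theorem \ref{proposition:SHAP:Fourier}(b) applies verbatim and yields, pathwise,
\[
\left|\phi_i(h_N;x^*)-\phi_i(h_{N,\mathcal{S}};x^*)\right|\le \Big(\sum_{k\notin\mathcal{S}}w_k(i;x^*)^2\Big)^{1/2}\,\|r_{\mathcal{S}}(h_N)\|_{L^2(\mu)}.
\]
Taking expectations over the network randomness, the deterministic factor $\big(\sum_{k\notin\mathcal{S}}w_k(i;x^*)^2\big)^{1/2}$ pulls out, and it remains to bound $\mathbb{E}\|r_{\mathcal{S}}(h_N)\|_{L^2(\mu)}$.

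The key step is to exploit that $\mathbb{E}\|r_{\mathcal{S}}(h_N)\|_{L^2(\mu)}$ depends only on the law $\mathcal{L}(H_N)$, hence may be computed along any coupling of $H_N$ and $H$; I would choose an optimal coupling $\pi$ realizing $\epsilon_N=W_2^\mu(\mathcal{L}(H_N),\mathcal{L}(H))$, which exists because $\mathcal{Y}$ is finite. Since $I-P_{\mathcal{S}}$ is an orthogonal projector, hence a contraction on $L^2(\mu)$, the triangle inequality gives, pointwise on the coupling space,
\[
\|(I-P_{\mathcal{S}})H_N\|_{L^2(\mu)}\le \|(I-P_{\mathcal{S}})H\|_{L^2(\mu)}+\|H_N-H\|_{L^2(\mu)}.
\]
Taking $\mathbb{E}_\pi[\cdot]$: the first term integrates to $\mathbb{E}\|r_{\mathcal{S}}(h)\|_{L^2(\mu)}$ (again coupling-invariant, being a function of $\mathcal{L}(H)$ only), while the second is controlled by $\big(\mathbb{E}_\pi\|H_N-H\|_{L^2(\mu)}^2\big)^{1/2}=\epsilon_N$ via Cauchy–Schwarz. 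Hence $\mathbb{E}\|r_{\mathcal{S}}(h_N)\|_{L^2(\mu)}\le \mathbb{E}\|r_{\mathcal{S}}(h)\|_{L^2(\mu)}+\epsilon_N$, which combined with the previous paragraph is exactly \eqref{eq:thm:W:02}. I expect this coupling bookkeeping — marginal expectations coupling-invariant, cross term controlled on the optimal plan — to be the only genuinely delicate point; the rest is routine.

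For the diagonal case I would invoke the first assertion of Theorem \ref{thm:error:analysis:02}, namely $\mathbb{E}\|r_{\mathcal{S}}(h)\|_{L^2(\mu)}^2=\mathrm{tr}\big((I-P_{\mathcal{S}})K\big)$, which when $K\Psi_k=s_k\Psi_k$ equals $\sum_{k\notin\mathcal{S}}s_k$; a single application of Jensen's inequality gives $\mathbb{E}\|r_{\mathcal{S}}(h)\|_{L^2(\mu)}\le\big(\sum_{k\notin\mathcal{S}}s_k\big)^{1/2}$, and substituting into \eqref{eq:thm:W:02} produces \eqref{eq:thm:W:03}, completing the proof.
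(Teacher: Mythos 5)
Your proposal is correct and follows essentially the same route as the paper's proof: pathwise application of Theorem \ref{proposition:SHAP:Fourier}(b), a triangle-inequality comparison of $\|(I-P_{\mathcal{S}})h_N\|_{L^2(\mu)}$ with $\|(I-P_{\mathcal{S}})h\|_{L^2(\mu)}$ under an optimal Wasserstein coupling, and Lemma \ref{lemma:E:rS} together with Jensen's inequality for the diagonal case. If anything, you are slightly more explicit than the paper on the coupling-invariance of the marginal expectations and on the Cauchy--Schwarz step turning $W_2^\mu$ into a bound on $\mathbb{E}\|H_N-H\|_{L^2(\mu)}$, which the paper states without comment.
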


\section{Numerical experiments}
\label{section:numerical:experiments}


This section reports the experimental evaluation of the proposed Fourier-SHAP method. The goal is to assess whether the deterministic and probabilistic stability properties established in Section \ref{section:SHAP:decomposition} on the clinical dataset. The analysis focuses on the magnitude and ranking of SHAP values, as well as computational efficiency. 

The experiments were conducted on a local machine running Windows 11 (64-bit, build 26100). The system is equipped with an Intel Core Ultra 5 225H (14 cores, base 1.7 GHz, x86 64/AMD64) and 32 Gb of RAM. The algorithms were implemented in Python 3.13.5 (Anaconda). The code used to reproduce the example is available on the DCN-FAU-AvH GitHub repository\footnote{\url{https://github.com/DCN-FAU-AvH/Fourier-SHAP-values}}.

\subsection{Setting, training and sparse representation}

In this experiment, we investigate whether a compact neural network can identify patients at higher risk of stroke from routinely collected, tabular clinical data. The central challenge is the strong class imbalance (stroke is rare) and the fact that many predictors are categorical or best summarized by clinically meaningful ranges.

We use the publicly available Kaggle stroke dataset \footnote{\url{https://www.kaggle.com/datasets/fedesoriano/stroke-prediction-dataset}} after strict cleaning (complete-case analysis and exclusion of ages less than 2 years). The final sample contains 4,795 patients with 229 stroke events (approximately 4.8\% prevalence). To reflect clinical practice and make cut-offs interpretable, continuous variables are binned with medical thresholds. In particular, 
\begin{itemize}
    \item Age in eight life-stage ranges: $[2,15]$, $[16,26]$, $[27,36]$, $[37,44]$, $[45,52]$, $[53,60]$, $[61,71]$ and $[72,82]$.
    \item Average glucose level (mg/dL): $[55,70)$, $[70,100)$, $[100,110)$, $[110,126)$, $[126,155)$, $[155,200)$, $[200,250)$, $[250,272)$. These intervals mirror the conventional normal/prediabetes/diabetes bands and subdivide the higher ranges.
    \item BMI ($kg/m^2$): World Health Organization categories extended for $[11,18.5)$, $[18.5,25)$, $[25,30)$, $[30,35)$, $[35,40)$, $[40,50)$, $[50,60)$, $[60,97.6)$.
\end{itemize}

Binary flags (hypertension, heart disease, residence type, ever-married) are kept as $0$ or $1$. Smoking status is encoded as Never, Unknown, Former, Current. We point out that 'Unknown' is frequent (1,369 patients, 28.6\% of the cleaned dataset), so we retain it as a distinct category to avoid discarding a large subset or imposing unverified imputations.

The dataset is randomly partitioned into three disjoint subsets: training (70\%), validation (15\%), and testing (15\%), with stratification according to the response variable to preserve class proportions. During training, the class imbalance is compensated by assigning sample weights such that the total contribution of the positive and negative classes is balanced. This weighting prevents bias toward the majority class while maintaining consistent gradient magnitudes during optimization. The validation set is used to tune the decision threshold that maximizes the F1-score, and the test set remains unseen until the final evaluation of predictive performance.

The predictive model employed is a fully connected feedforward neural network designed for binary classification. The architecture consists of three hidden layers with 256, 128, and 64 neurons, respectively, each followed by a ReLU activation function, and an output layer with two neurons combined through a softmax operator. The network is trained using the Adam optimizer with a learning rate of $10^{-3}$, an $L^2$-regularization parameter of $10^{-4}$, and a batch size of 256. The training process is subject to early stopping with a patience of 30 epochs, and a maximum number of 500 iterations.



The sparse spectral representation is built with atoms of maximal interaction order or $d_{max}=3$. The candidate pool of atoms is constructed in three stages:
\begin{enumerate}
    \item {\bf Univariate terms:} the top $K_1=300$ single-feature modes with the largest absolute correlation with the training probabilities $h_\Theta(x)$;
    \item {\bf Pairwise interactions:} for each feature, the top five univariate modes are combined pairwise across features, and the top $K_2=4000$ pairs by correlation magnitude are retained;
    \item {\bf Triple interactions:} starting from the leading pairs, additional combinations with a third feature are formed, and the top $K_3=2000$ triplets are included. 
\end{enumerate}

In this experiment, both the Fourier-SHAP and the Kernel-SHAP explanations are computed on the logit (log-odds) scale rather than on the raw probability scale. For each input $x$, the neural network produces a predicted probability $p_{\Theta}(x)\in (0,1)$, which is transformed into the logit variable
\begin{align*}
    h_\Theta(x)=\log \left(\dfrac{p_\Theta(x)}{1-p_\Theta(x)} \right). 
\end{align*}

This transformation provides the natural additive domain for binary classifiers whose final activation is logistic, because of this scale the model's latent score is linear and its internal parameters. Consequently, both SHAP methods decompose the prediction into additive feature contributions of the form
\begin{align*}
    h_\Theta(x)=\phi_0 + \sum_{j=1}^d \phi_j(x),
\end{align*}
where $\phi_j(x)$ quantifies the effect of the $j^{\text{th}}$ feature in log-odds units. Working on the logit scale thus ensures that the fundamental additivity property of SHAP is preserved and that both the Fourier-based surrogate and the Kernel approximation describe the same underlying quantity.

\subsection{Results}

The global metrics obtained for the training are satisfactory given the intrinsic imbalance of the dataset. The model achieved an area under the ROC curve of approximately 0.83 on the test set, indicating that it correctly ranks positive samples above negatives in about $83\%$ of the cases. In the context of a binary classification task with a rare positive class, this level of AUC represents a clear separation between both populations and confirms that the classifier captures meaningful discriminative structure in the data. The average precision (AP) on the test set was 0.20, which must be interpreted relative to the prevalence of positive cases: since the baseline AP for a random classifier equals the class prevalence, values significantly above this baseline demonstrate that the model substantially improves the identification of positive cases despite the scarcity of such observations.

Other metrics, such as the F1 score (0.20) and the balanced accuracy (0.61), are modest in absolute terms but consistent with expectations for highly unbalanced problems. Under these conditions, high overall accuracy (0.90) mainly reflects the dominance of the negative class, and the F1-score is inevitably limited by the trade-off between precision and recall. These values therefore do not indicate a deficiency of the model but rather the structural difficulty of converting a good ranking ability into a single decision threshold when positive cases are rare. Taken together, the AUC and AP values confirm that the network has learned relevant patterns and provides a useful basis for probabilistic risk estimation, which is the appropriate interpretation framework in the presence of  imbalance.
\begin{figure}[H]
    \centering
    \includegraphics[width=0.5\linewidth]{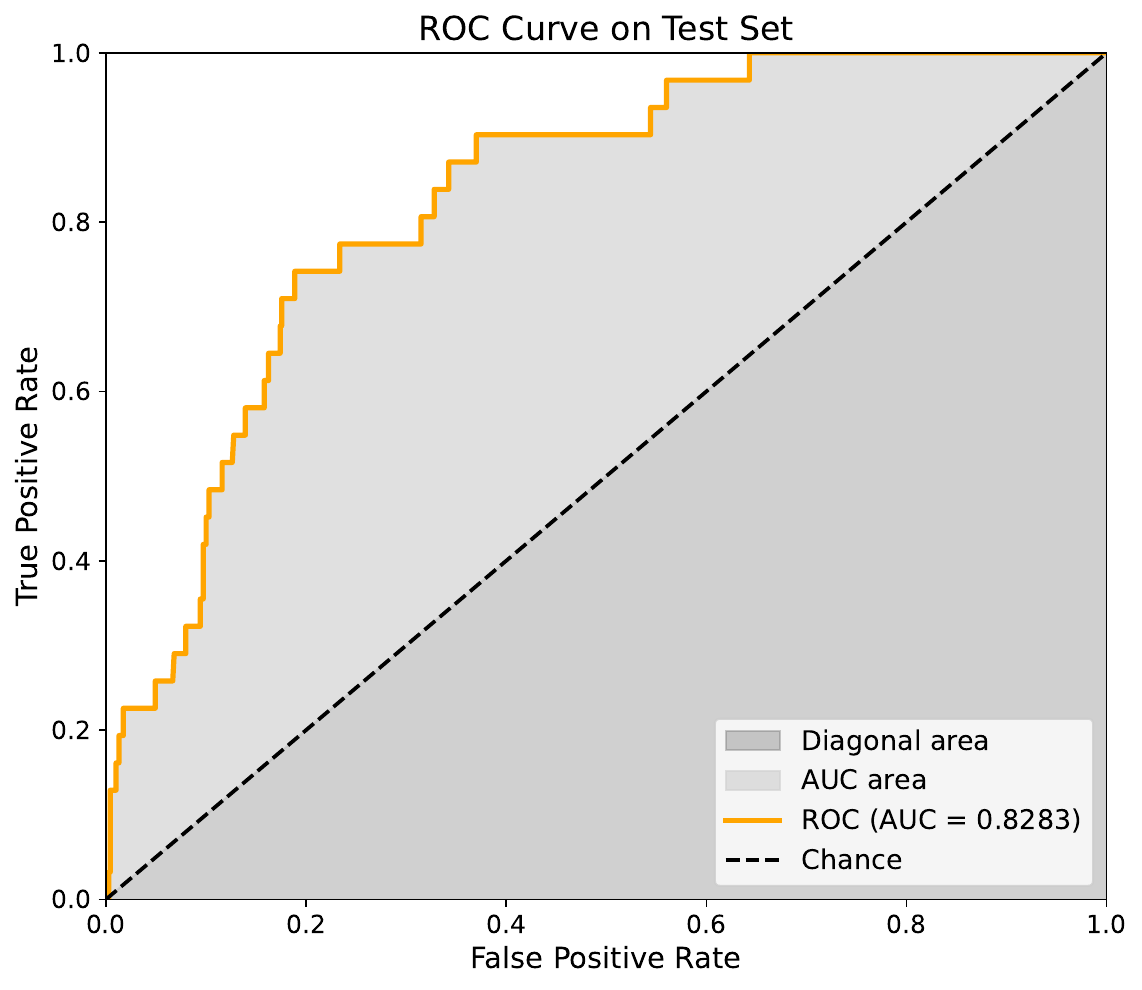}
    \caption{ROC curve of the trained classifier on the test set. The curve illustrates the trade-off between the true positive rate and the false positive rate across different classification thresholds. An AUC of approximately 0.83 indicates strong discriminative ability, showing that the model ranks positive samples above negatives in about 83\% of the cases.}
    \label{fig:ROC:test}
\end{figure}


To evaluate the relative contribution of clinical and demographic variables across age groups, SHAP values were computed using both Fourier-SHAP and Kernel-SHAP formulations on the logit scale. For this purpose, the test set was partitioned into the eight age bins (from [2,15] to [72,82]), and the mean absolute SHAP values were calculated separately within each bin. In Figure \ref{fig:meanABS:SHAP:all:ages}, the resulting barplots display, for each bin the average magnitude of feature contributions on a logarithmic scale. See Appendix \ref{Appendix:tables} for more details.

\newpage 
\begin{figure}[H]
    \centering
    \includegraphics[width=0.9\linewidth]{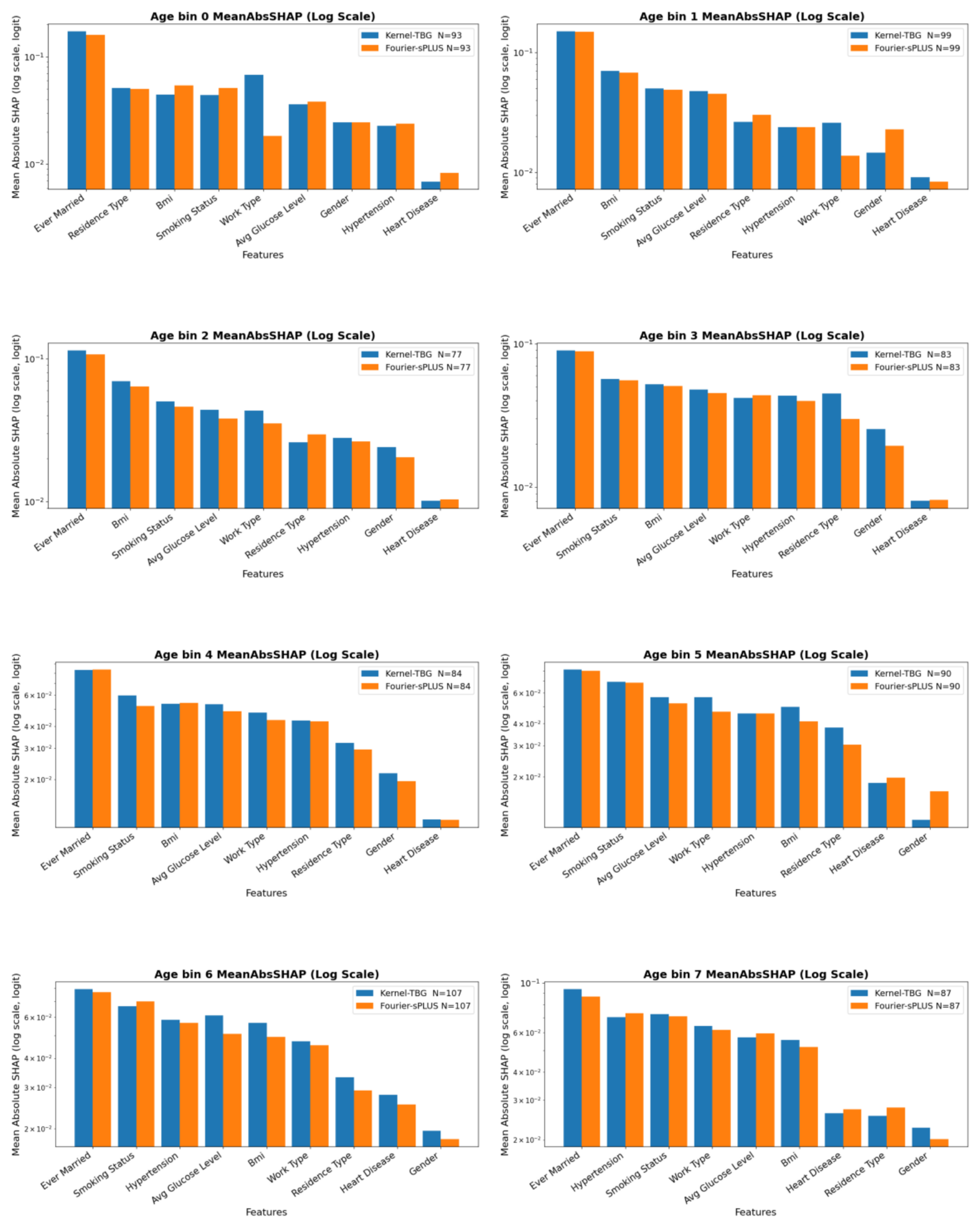}
    \caption{Mean absolute SHAP values on the logit scale across age bins. Each panel displays bar plots for the same set of clinical and demographic covariates within a specific age bin; higher bars indicate greater average contribution to the model's output magnitude. The log scale highlights both dominant and secondary drivers of risk across bins, facilitating cross-age comparisons of feature importance.}
    \label{fig:meanABS:SHAP:all:ages}
\end{figure}
\newpage 

The resulting barplots in Figure \ref{fig:meanABS:SHAP:all:ages} provide a detailed view of how the mean absolute SHAP values evolve with age, enabling a direct comparison between the Fourier and Kernel formulations. In the first three age bins, corresponding approximately to individuals younger than 35 years, the feature Ever married appears as the most influential according to both methods. This ranking, however, must be interpreted with caution. As shown by the per-bin label statistics, these age ranges contain no positive outcomes and a very low prevalence of marital status equal to one. Therefore, the large SHAP amplitudes assigned to Ever married in these bins are an artifact arising from a mixture of data imbalance and a near-perfect correlation between age and marital status. In practice, the model interprets ``Ever Married" as a proxy for chronological age, amplifying its apparent relevance even though it carries no intrinsic predictive meaning for the target variable. 

Starting from Bin 3 (ages $[37,44]$), both Fourier and Kernel SHAP values begin to display more stable and physiologically coherent patterns. In this bin, Ever married loses its dominance, and variables such as smoking status, BMI, average glucose level, and hypertension emerge as comparably relevant contributors. These variables are not only statistically significant in the model but also consistent with established medical literature on cerebrovascular and metabolic risk factors. The agreement between Fourier and Kernel SHAP rankings across bins indicates that the spectral surrogate used in the Fourier approach effectively approximates the contribution patterns estimated by the Kernel-based sampling method.

In Bin 4 (ages $ [45-52]$), the ranking continues to stabilize: smoking status and BMI become the leading explanatory variables, followed closely by average glucose level and hypertension, while demographic attributes such as gender, residence type, and work type appear with lower magnitude. The reduction of Ever married to a secondary role in this age interval confirms that its apparent early importance was largely a confounding effect. Moreover, the close alignment between Fourier-SHAP and Kernel-SHAP bars in this and subsequent bins demonstrates the numerical stability of the Fourier approximation.

In the middle-to-older bins (5–6), both attribution methods converge even more strongly: smoking status and hypertension dominate the explanation, with BMI and glucose level contributing moderately. The ranking of variables becomes smoother, reflecting the increased homogeneity of risk patterns in midlife and early elderly populations. Interestingly, the near-identical shape of the Fourier and Kernel bars in these bins reinforces the reliability of the Fourier surrogate to reproduce the SHAP structure at a fraction of the computational cost. This level of consistency provides empirical validation of the Fourier method’s efficiency and interpretive fidelity.

In the oldest bin (7, ages $[72,82]$), the distribution of SHAP magnitudes changes slightly: Ever married reappears with a mild contribution, although still below the main physiological features. This late-age increase may reflect secondary social or behavioral effects—such as differential survival, healthcare access, or living arrangements among married individuals—rather than a direct causal influence on the outcome variable. Nevertheless, the broad agreement between Fourier and Kernel results suggests that any residual effect captured by Ever married in this group is genuine but limited in magnitude. Overall, the barplots reveal that the dominant explanatory variables evolve from socio-demographic proxies in young ages to medical and behavioral predictors in older ages, mirroring the natural progression of risk determinants in real-world populations.

These findings are consistent with large-scale epidemiological evidence linking marital status and cerebrovascular outcomes. In particular, the meta-analysis by Wong et. al. \cite{Wong2018} reported that unmarried, divorced, or widowed individuals exhibit significantly higher risks of both suffering and dying from stroke compared with married counterparts (pooled odds ratios $\approx 1.15-1.55$). The mild resurgence of Ever married as a relevant factor in the oldest age bin of our experiment may thus reflect social or behavioral mechanisms previously identified in clinical studies, reinforcing the interpretability of the model in light of well-established medical literature.

\subsection{Time and Peak memory}

The results reported in Table \ref{tab:time:peak:memory:SHAP} compare the computational cost of the Kernel- and Fourier-based SHAP computations across all age bins in terms of runtime and peak memory consumption.

\begin{table}[H]
    \centering
    \begin{tabular}{|c||c|c|c|c|} \hline 
         Bin & T. Kernel(sec) & P. M. Kernel(MB)  & T. Fourier(sec) & P. M. Fourier(MB) \\ \hline \hline
         $[2,15]$ & 579.0208 & 11510.6790 & $3.2280\times 10^{-3}$ & $4.3000\times 10^{-2}$ \\ \hline 
         $[16,26]$ & 611.2701 & 11510.6530 & $1.6310\times 10^{-3}$  & $2.7000\times 10^{-2}$\\ \hline 
         $[27,36]$ & 475.1646 & 11510.6300 & $1.4140\times 10^{-3}$ & $2.2000 \times 10^{-2}$ \\ \hline 
         $[37,44]$ & $516.9249$ & $11510.6340$ & $1.6810\times 10^{-3}$ & $2.3000\times 10^{-2}$\\ \hline 
         $[45,52]$ & $520.6108$ & $11510.6330$ & $1.4630\times 10^{-3}$ & $2.4000\times 10^{-2}$\\ \hline 
         $[53,60]$ & $557.1801$ & $11510.6340$ & $1.4590\times 10^{-3}$ & $2.5000\times 10^{-2}$ \\ \hline 
         $[61,71]$ & 623.4067 & 11510.6430 & $1.3890\times 10^{-3}$ & $2.9000\times 10^{-2}$\\ \hline 
         $[72,82]$ & $542.0964$ & 11510.7810 & $1.7550\times 10^{-3}$ & $2.4000\times 10^{-2}$ \\ \hline 
    \end{tabular}
    \caption{Comparison of runtime and peak memory usage between the Kernel and Fourier SHAP implementations across different data bins.}
    \label{tab:time:peak:memory:SHAP}
\end{table}

The differences are striking: while the classical Kernel SHAP method requires between approximately $475$ seconds and $625$ seconds per bin and peaks around $11.5$ GB of memory, the Fourier SHAP surrogate completes the same attribution task in only $1--3 \times 10^{-3}$ seconds using less than $0.03$ MB of memory. The difference between the time and peak memory in logarithmic scale is depicted in the Figure \ref{fig:time:peak:memory:SHAP}.

\begin{figure}[H]
    \centering
    \includegraphics[width=1\linewidth]{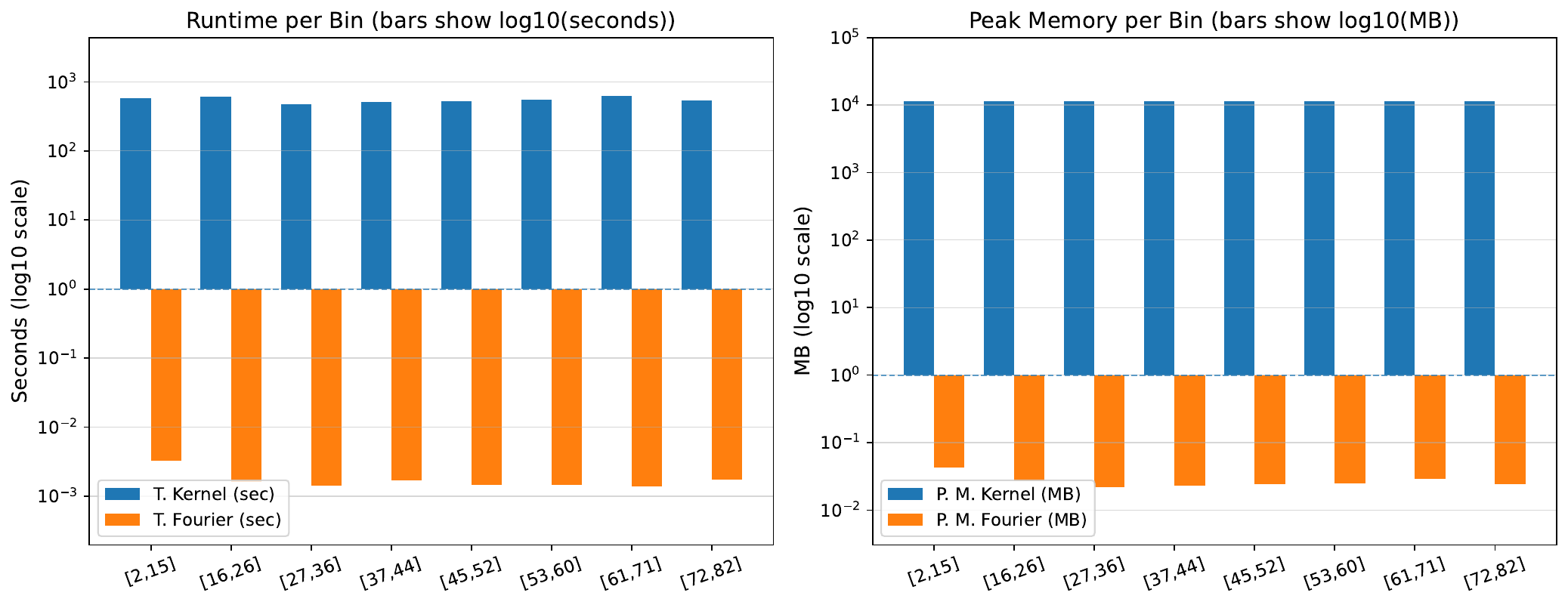}
    \caption{Barplots of the logarithmic runtime and peak memory for the Kernel and Fourier SHAP values across data bins.}
    \label{fig:time:peak:memory:SHAP}
\end{figure}

These results demonstrate that the spectral formulation achieves several orders of magnitude of computational savings in both time and memory.
Such efficiency arises because Fourier SHAP relies on a pre-computed orthonormal expansion of the model’s logit outputs rather than repeated model evaluations over exponentially many feature coalitions, as required by Kernel SHAP.
The near-constant runtime of the Fourier method across all bins also indicates that its complexity is independent of sample size once the surrogate basis is constructed, making it suitable for large-scale or real-time interpretability tasks.
In contrast, the high and nearly uniform memory footprint of Kernel SHAP reflects the cost of maintaining multiple model copies and kernel weight matrices during sampling.
Overall, Table \ref{tab:time:peak:memory:SHAP} quantifies the practical advantage of the proposed Fourier approach, showing that it reproduces SHAP-like attributions at a computational cost reduced by roughly eight orders of magnitude compared with the standard Kernel estimator.

These empirical findings are consistent with the theoretical results established in Section \ref{section:SHAP:decomposition}. In particular, the strong agreement between Fourier- and Kernel-SHAP values across all age bins confirms the deterministic stability stated by Theorem \ref{proposition:SHAP:Fourier}, and the probabilistic convergence behavior described in Theorem \ref{thm:error:analysis:02}. The negligible discrepancy between both methods supports the view that the dominant SHAP contributions are captured by low-order spectral components, as predicted by the Fourier-SHAP theory developed in this article.

\section{Conclusions and open problems}
In this work, we have developed a rigorous mathematical framework for the spectral analysis of SHAP values, grounded on the orthonormal expansion of predictors in a generalized Fourier basis adapted to discrete, multi-valued under arbitrary product measures. This formulation extends the classical binary Walsh-Hadamard representation to a much broader setting, encompassing arbitrary finite alphabets and non-uniform distributions.

Within this framework, we established deterministic and probabilistic results describing how SHAP values can be decomposed, approximated, and interpreted in spectral terms. Deterministically, we proved error estimates linking the truncation of Fourier components with the variation of SHAP values, providing precise $L^2(\mu)$-bounds that quantify interpretability losses under model approximation.

Probabilistically, we analyzed the asymptotic convergence of SHAP values for neural networks as their width tends to infinity, showing that the corresponding Fourier-SHAP distributions converge to those induced by Gaussian processes in Wasserstein distance. Together, these results provide a unified theoretical perspective on interpretability through Fourier analysis, bridging local feature attributions with global spectral decompositions.

From a conceptual viewpoint, the proposed theory highlights that interpretability, when expressed through SHAP values, can be understood as a spectral projection of the model's response onto low-order interaction modes. 

This perspective reveals that the explainability of a model is intimately related to the decay properties of its Fourier spectrum, thus linking interpretability, sparsity, and smoothness in a precise mathematical sense. In this regard, Fourier-SHAP serves not merely as an alternative computational scheme, but as a structural generalization that captures the intrinsic symmetries and orthogonality properties underlying SHAP.

At the same time, several important questions remain open. We conclude by outlining a number of directions that we believe are both challenging and promising for future research:
\begin{enumerate}
    \item {\bf Quantitative rates of SHAP convergence.} The current framework establishes asymptotic convergence of SHAP values under spectral truncations, but without explicit rates in the case when features takes discrete values.
    \item {\bf Non-product measures and dependence among features.} Our analysis assumes a product probability measure on the input space, ensuring independence of features and orthogonality of the tensor basis.
    \item {\bf Beyond deterministic truncations: stochastic perturbations and robustness.} In practice, models and data are often subject to random noise or stochastic perturbations. Understanding how SHAP values behave under random model perturbations, and deriving concentration inequalities or stability bounds for their spectral approximations, remains an open question with implications for robustness and uncertainty quantification.
    \item {\bf Algorithmic scalability and sparse spectral recovery.} From a computational viewpoint, the theoretical results motivate the development of efficient sparse algorithms for approximating SHAP values using only a small number of Fourier coefficients. 
\end{enumerate}
In summary, this article provides a theoretical foundation for understanding model interpretability through the lens of spectral analysis. By unifying SHAP values, Fourier expansions, and probabilistic limits, it bridges classical game-theoretic attributions with harmonic representations of learning models. Addressing the open problems above will further advance this spectral perspective, yielding new analytical, algorithmic, and conceptual insights into the mathematics of interpretability. 

\section*{Acknowledgments}
The author wishes to express his sincere gratitude to Umberto Biccari and Enrique Zuazua for their thoughtful remarks and constructive suggestions, which have significantly contributed to the refinement and clarity of this work. He also thanks Maryory Galvis Pedraza for her valuable discussions and medical interpretation of the experimental results.

This project has received funding from the European Research Council(ERC) under the European Union's Horizon 2030 research and innovation programme (grant agreement NO: 101096251-CoDeFeL).

\bibliographystyle{plain}
\bibliography{biblio.bib}

\begin{thebibliography}{10}

\bibitem{adams2024finite}
Steven Adams, Morteza Lahijanian, Luca Laurenti, et~al.
\newblock Finite neural networks as mixtures of gaussian processes: From provable error bounds to prior selection.
\newblock {\em arXiv preprint arXiv:2407.18707}, 2024.

\bibitem{agrawal2020wide}
Devanshu Agrawal, Theodore Papamarkou, and Jacob Hinkle.
\newblock Wide neural networks with bottlenecks are deep gaussian processes.
\newblock {\em Journal of Machine Learning Research}, 21(175):1--66, 2020.

\bibitem{chantry2021opportunities}
Matthew Chantry, Hannah Christensen, Peter Dueben, and Tim Palmer.
\newblock Opportunities and challenges for machine learning in weather and climate modelling: hard, medium and soft ai.
\newblock {\em Philosophical Transactions of the Royal Society A}, 379(2194):20200083, 2021.

\bibitem{cheraghchi2017nearly}
Mahdi Cheraghchi and Piotr Indyk.
\newblock Nearly optimal deterministic algorithm for sparse walsh-hadamard transform.
\newblock {\em ACM Transactions on Algorithms (TALG)}, 13(3):1--36, 2017.

\bibitem{dautrayvol2}
Robert Dautray and Jacques-Louis Lions.
\newblock {\em Mathematical analysis and numerical methods for science and technology. {V}ol. 2}.
\newblock Springer-Verlag, Berlin, 1988.

\bibitem{dixon2020machine}
Matthew~F Dixon, Igor Halperin, Paul Bilokon, et~al.
\newblock {\em Machine learning in finance}, volume 1170.
\newblock Springer, 2020.

\bibitem{gao2024machine}
Hanyao Gao, Gang Kou, Haiming Liang, Hengjie Zhang, Xiangrui Chao, Cong-Cong Li, and Yucheng Dong.
\newblock Machine learning in business and finance: a literature review and research opportunities.
\newblock {\em Financial Innovation}, 10(1):86, 2024.

\bibitem{gorji2023scalable}
Ali Gorji, Andisheh Amrollahi, and Andreas Krause.
\newblock A scalable walsh-hadamard regularizer to overcome the low-degree spectral bias of neural networks.
\newblock In {\em Uncertainty in Artificial Intelligence}, pages 723--733. PMLR, 2023.

\bibitem{gorji2024shap}
Ali Gorji, Andisheh Amrollahi, and Andreas Krause.
\newblock Shap values via sparse fourier representation.
\newblock {\em arXiv preprint arXiv:2410.06300}, 2024.

\bibitem{habehh2021machine}
Hafsa Habehh and Suril Gohel.
\newblock Machine learning in healthcare.
\newblock {\em Current genomics}, 22(4):291--300, 2021.

\bibitem{hamming1950error}
Richard~W Hamming.
\newblock Error detecting and error correcting codes.
\newblock {\em The Bell system technical journal}, 29(2):147--160, 1950.

\bibitem{hamon2022bridging}
Ronan Hamon, Henrik Junklewitz, Ignacio Sanchez, Gianclaudio Malgieri, and Paul De~Hert.
\newblock Bridging the gap between ai and explainability in the gdpr: towards trustworthiness-by-design in automated decision-making.
\newblock {\em IEEE Computational Intelligence Magazine}, 17(1):72--85, 2022.

\bibitem{jacot2018neural}
Arthur Jacot, Franck Gabriel, and Cl{\'e}ment Hongler.
\newblock Neural tangent kernel: Convergence and generalization in neural networks.
\newblock {\em Advances in neural information processing systems}, 31, 2018.

\bibitem{kashinath2021physics}
Karthik Kashinath, M~Mustafa, Adrian Albert, JL~Wu, C~Jiang, Soheil Esmaeilzadeh, Kamyar Azizzadenesheli, R~Wang, Ashesh Chattopadhyay, Aakanksha Singh, et~al.
\newblock Physics-informed machine learning: case studies for weather and climate modelling.
\newblock {\em Philosophical Transactions of the Royal Society A}, 379(2194):20200093, 2021.

\bibitem{laurent2000adaptive}
Beatrice Laurent and Pascal Massart.
\newblock Adaptive estimation of a quadratic functional by model selection.
\newblock {\em Annals of statistics}, pages 1302--1338, 2000.

\bibitem{lee2017deep}
Jaehoon Lee, Yasaman Bahri, Roman Novak, Samuel~S Schoenholz, Jeffrey Pennington, and Jascha Sohl-Dickstein.
\newblock Deep neural networks as gaussian processes.
\newblock {\em arXiv preprint arXiv:1711.00165}, 2017.

\bibitem{lee2019wide}
Jaehoon Lee, Lechao Xiao, Samuel Schoenholz, Yasaman Bahri, Roman Novak, Jascha Sohl-Dickstein, and Jeffrey Pennington.
\newblock Wide neural networks of any depth evolve as linear models under gradient descent.
\newblock {\em Advances in neural information processing systems}, 32, 2019.

\bibitem{lundberg2017unified}
Scott~M Lundberg and Su-In Lee.
\newblock A unified approach to interpreting model predictions.
\newblock {\em Advances in neural information processing systems}, 30, 2017.

\bibitem{luo2019theory}
Tao Luo, Zheng Ma, Zhi-Qin~John Xu, and Yaoyu Zhang.
\newblock Theory of the frequency principle for general deep neural networks.
\newblock {\em arXiv preprint arXiv:1906.09235}, 2019.

\bibitem{nagarhalli2021impact}
Tatwadarshi~P Nagarhalli, Vinod Vaze, and NK~Rana.
\newblock Impact of machine learning in natural language processing: A review.
\newblock In {\em 2021 third international conference on intelligent communication technologies and virtual mobile networks (ICICV)}, pages 1529--1534. IEEE, 2021.

\bibitem{neal2012bayesian}
Radford~M Neal.
\newblock {\em Bayesian learning for neural networks}, volume 118.
\newblock Springer Science \& Business Media, 2012.

\bibitem{parhi2025random}
Rahul Parhi, Pakshal Bohra, Ayoub El~Biari, Mehrsa Pourya, and Michael Unser.
\newblock Random relu neural networks as non-gaussian processes.
\newblock {\em Journal of Machine Learning Research}, 26(19):1--31, 2025.

\bibitem{powers2012machine}
David~MW Powers and Christopher~CR Turk.
\newblock {\em Machine learning of natural language}.
\newblock Springer Science \& Business Media, 2012.

\bibitem{rahaman2019spectral}
Nasim Rahaman, Aristide Baratin, Devansh Arpit, Felix Draxler, Min Lin, Fred Hamprecht, Yoshua Bengio, and Aaron Courville.
\newblock On the spectral bias of neural networks.
\newblock In {\em International conference on machine learning}, pages 5301--5310. PMLR, 2019.

\bibitem{williams2006gaussian}
Carl~Edward Rasmussen and Christopher K.~I. Williams.
\newblock {\em Gaussian Processes for Machine Learning}.
\newblock Adaptive Computation and Machine Learning. MIT Press, Cambridge, MA, 2006.

\bibitem{saleem2020exploring}
Tausifa~J Saleem and Mohammad~Ahsan Chishti.
\newblock Exploring the applications of machine learning in healthcare.
\newblock {\em International Journal of Sensors Wireless Communications and Control}, 10(4):458--472, 2020.

\bibitem{santambrogio2015optimal}
Filippo Santambrogio.
\newblock {\em Optimal transport for applied mathematicians}.
\newblock Springer, 2015.

\bibitem{shapley1953value}
Lloyd~S. Shapley.
\newblock A value for n-person games.
\newblock In Harold~W. Kuhn and Albert~W. Tucker, editors, {\em Contributions to the Theory of Games, Volume II}, volume~28 of {\em Annals of Mathematics Studies}, pages 307--317. Princeton University Press, Princeton, NJ, 1953.

\bibitem{sharma2019data}
Sanjay Sharma.
\newblock {\em Data privacy and GDPR handbook}.
\newblock John Wiley \& Sons, 2019.

\bibitem{vershynin2018high}
Roman Vershynin.
\newblock {\em High-dimensional probability: An introduction with applications in data science}, volume~47.
\newblock Cambridge university press, 2018.

\bibitem{villani2021topics}
C{\'e}dric Villani.
\newblock {\em Topics in optimal transportation}, volume~58.
\newblock American Mathematical Soc., 2021.

\bibitem{Wong2018}
Chun~Wai Wong, Chun~Shing Kwok, Aditya Narain, Martha Gulati, Anastasia~S Mihalidou, Pensee Wu, Mirvat Alasnag, Phyo~Kyaw Myint, and Mamas~A Mamas.
\newblock Marital status and risk of cardiovascular diseases: a systematic review and meta-analysis.
\newblock {\em Heart}, 104(23):1937--1948, 2018.

\bibitem{xu2019frequency}
Zhi-Qin~John Xu, Yaoyu Zhang, Tao Luo, Yanyang Xiao, and Zheng Ma.
\newblock Frequency principle: Fourier analysis sheds light on deep neural networks.
\newblock {\em arXiv preprint arXiv:1901.06523}, 2019.

\bibitem{yaida2020non}
Sho Yaida.
\newblock Non-gaussian processes and neural networks at finite widths.
\newblock In {\em Mathematical and Scientific Machine Learning}, pages 165--192. PMLR, 2020.

\bibitem{yang2019wide}
Greg Yang.
\newblock Wide feedforward or recurrent neural networks of any architecture are gaussian processes.
\newblock {\em Advances in Neural Information Processing Systems}, 32, 2019.

\end{thebibliography}

\appendix 

\section{Spectral Stability of SHAP for Neural Networks}
\label{section:appendix}
Consider a NN with $(L-1)$-hidden layers and general activation functions, We consider the $n$-dimensional input as the $0^{\text{th}}$-layer and the one-dimensional output as the $L^{\text{th}}$-layer. In the $\ell^{\text{th}}$-layer ($0\leq \ell \leq L$), $n_\ell$ is the number of neurons. In our case, we take $n_0=n$ and $n_L=1$.

The DNN is parametrized by the family of parameters $\Theta$ of the form:
\begin{align*}
    \Theta:=\left\{ W^{(\ell)}, A^{(\ell)}, b^{(\ell)}\right\}_{\ell=1}^L,
\end{align*}
where for each $\ell\in [L-1]$, we have
\begin{align*}
    W^{(\ell)}:= \left\{W_i^{(\ell)}\right\}_{i=1}^{n_\ell},\quad W_i^{(\ell)}\in \mathbb{R},    
\end{align*}
and for all $\ell\in [L]$,
\begin{align*}
    \begin{cases} 
        A^{(\ell)}:= \left\{A_i^{(\ell)}\right\}_{i=1}^{n_\ell},\quad A_i^{(\ell)}\in \mathbb{R}^{n_{\ell-1}},\\
        b^{(\ell)}:= \left\{b_i^{(\ell)}\right\}_{i=1}^{n_\ell},\quad b_i^{(\ell)}\in \mathbb{R}.
    \end{cases}
\end{align*}

The architecture of the NN is characterized as follows:
Let us define the activation functions
    \begin{align}
        \label{def:sigma:i}
        \sigma_i^{(\ell)}: \mathbb{R}\to \mathbb{R},\quad i\in [n_{\ell}],\,  \ell\in [L-1].
    \end{align}

Given the function $h^{(0)}:\mathbb{R}^{n}\to \mathbb{R}^{n}$, we define, for $\ell\in [L-1]$, the functions $h^{(\ell)}:\mathbb{R}^{n}\to \mathbb{R}^{n_\ell}$ in the following way:
    \begin{align}
        \label{def:h:ell}
        (h^{(\ell)}(x))_i=W_i^{(\ell)} \sigma_i^{(\ell)} \left(A_i^{(\ell)} h^{(\ell-1)} (x) + b_i^{(\ell)}\right),\quad i\in [n_\ell]. 
    \end{align}

    Finally, we denote $h^{(L)}:\mathbb{R}^{n} \to \mathbb{R}$ as follows:
    \begin{align}
        \label{def:h:L}
        h^{(L)}(x)=A^{(L)} h^{(L-1)}(x) + b^{(L)}.
    \end{align}

For $k\in \mathbb{N}$, we make the following hypotheses:
\begin{itemize}
    \item[{\bf (A1)}] The input layer function $h^0:\mathbb{R}^n \to \mathbb{R}^n$ belongs to $W^{k,\infty}_{\text{loc}}(\mathbb{R}^n;\mathbb{R}^n)$. 

    \item[{\bf (A2)}] For each $\ell=1,\ldots,L-1$ and $i=1,\ldots,n_\ell$, the activation function $\sigma_i^{(\ell)}\in W^{k,\infty}_{\text{loc}}(\mathbb{R})$.
\end{itemize}

We recall some basic facts on the Fourier transform in $\mathbb{R}^n$ (see \cite{dautrayvol2} for more details). For $f\in L^2(\mathbb{R}^n)$, we define the Fourier transform of $f$ as follows:
\begin{align*}
    \hat{f}(\xi)=\mathcal{F}(f)(\xi):=\dfrac{1}{(2\pi)^{n/2}} \int_{\mathbb{R}^n} e^{-ix\cdot \xi} f(x)\,dx.
\end{align*}

The inverse Fourier transform is defined by 
\begin{align*}
    g(x)=\dfrac{1}{(2\pi)^{n}} \int_{\mathbb{R}^n} e^{ix\cdot \xi} \hat{g}(\xi)\,d\xi.
\end{align*}

We recall that, thanks to Plancherel's Theorem, the Fourier transform $\mathcal{F}:L^2(\mathbb{R}^n) \to L^2(\mathbb{R}^n)$ is an isometry, i.e., 
\begin{align*}
    \int_{\mathbb{R}^n} |f|^2\,dx =\int_{\mathbb{R}^n} |\hat{f}|^2\,d\xi,\quad \forall f\in L^2(\mathbb{R}^n).
\end{align*}

For an arbitrary function $h\in L^2(\mathbb{R}^n)$ and $r>0$, we define $h^{\text{app}}$ as the truncated Fourier approximation of $h$:
\begin{align}
    \label{def:h:app}
    \hat{h}^{\text{app}}=\begin{cases}
        \hat{h}(\xi)&\text{ if }\xi \in B_r,\\
        0&\text{ if }\xi \notin B_r,
    \end{cases}
\end{align}
where $B_r$ denotes the ball in $\mathbb{R}^n$ centered at the origin and radius $r>0$. For $R>0$ and $k\in \mathbb{N}$, consider the set.
\begin{align*}
    X:=\{f\in L^2(\mathbb{R}^n)\,:\, \|\hat{f}\|_{H^k(\mathbb{R}^n)} \leq R \}.
\end{align*}

The first result states that SHAP values are Lipschitz-continuous with respect to this spectral truncation.

\begin{theorem}
    \label{main:thm}
    For $k\in \mathbb{N}$, let us assume {\bf (A1)} and {\bf (A2)} and let $h:=h_L(x)$ be the output of the DNN defined in \eqref{def:h:ell} and \eqref{def:h:L}. Moreover, suppose that $h\in X$ for some $R>0$. Then, for any truncation radius $r>0$, the SHAP values of $h$ and $h^{\text{app}}$ defined by \eqref{def:h:app} satisfy
    \begin{align*}
        |\phi_i(h;x^*) - \phi_i(h^{\text{app}};x^*)| \leq C r^{-k},\quad \forall\, i\in [n].
    \end{align*}
    for some constant $C>0$ depending only on the network architecture and $R$.

    In other words, the SHAP value of a feature is mainly determined by the low-frequency content of the predictor. High-frequency components (which typically correspond to noise or overfitting) have a vanishing influence as $r\to +\infty$.
\end{theorem}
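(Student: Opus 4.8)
The plan is to exploit two structural facts: (i) the map $h\mapsto\phi_i(h;x^*)$ is \emph{linear} in the predictor, so the difference $\phi_i(h;x^*)-\phi_i(h^{\text{app}};x^*)$ equals $\phi_i(g;x^*)$ for the high‑frequency residual $g:=h-h^{\text{app}}$; and (ii) each coalitional value function is an \emph{average with respect to a probability measure} (the continuous analogue of the product measure $\mu$ on $\mathbb{R}^n$), hence $|v(g;S)|\leq\|g\|_{L^\infty(\mathbb{R}^n)}$. Combining these with the fact that the Shapley weights in \eqref{SHAP:formula} sum to one, $\sum_{S\subseteq[n]\setminus\{i\}}\frac{|S|!(n-|S|-1)!}{n!}=1$, reduces the whole statement to the single estimate $\|g\|_{L^\infty(\mathbb{R}^n)}\leq Cr^{-k}$. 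The decay will then be produced by a weighted Cauchy--Schwarz argument on the Fourier side.

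Concretely, I would proceed as follows. \emph{Step 1 (linearization).} Since $v(h;S)$ is linear in $h$, the definition \eqref{def:h:app} and linearity of $\mathcal{F}$ give $\widehat{g}=\widehat{h}\cdot\mathbf{1}_{\mathbb{R}^n\setminus B_r}$, and it suffices to bound $|\phi_i(g;x^*)|$. \emph{Step 2 (reduction to the sup‑norm).} Because each conditional expectation defining $v(g;S)$ is taken against a probability measure, $|v(g;S)|\leq\|g\|_{L^\infty(\mathbb{R}^n)}$ for every $S$, whence $|\phi_i(g;x^*)|\leq 2\|g\|_{L^\infty(\mathbb{R}^n)}$ by the weight‑normalization above. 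Here lies a genuine difference with the discrete Theorem~\ref{proposition:SHAP:Fourier}(b): one cannot replace $\|g\|_{L^\infty}$ by an $L^2$‑quantity, because the value functions are restrictions of $g$ to measure‑zero slices $\{x_S=x_S^*\}$, for which no $L^2$‑trace bound holds. \emph{Step 3 (Fourier decay).} By Fourier inversion, $\|g\|_{L^\infty(\mathbb{R}^n)}\leq(2\pi)^{-n}\|\widehat{g}\|_{L^1(\mathbb{R}^n)}=(2\pi)^{-n}\int_{|\xi|>r}|\widehat{h}(\xi)|\,d\xi$. On $\{|\xi|>r\}$ one has $1\leq r^{-k}|\xi|^{k}\leq r^{-k}(1+|\xi|^2)^{k/2}$, so
\begin{align*}
\int_{|\xi|>r}\bigl|\widehat{h}(\xi)\bigr|\,d\xi
&\;\leq\; r^{-k}\int_{\mathbb{R}^n}(1+|\xi|^2)^{k/2}\bigl|\widehat{h}(\xi)\bigr|\,d\xi\\
&\;\leq\; r^{-k}\Bigl(\int_{\mathbb{R}^n}(1+|\xi|^2)^{-s}\,d\xi\Bigr)^{1/2}\Bigl(\int_{\mathbb{R}^n}(1+|\xi|^2)^{k+s}\bigl|\widehat{h}(\xi)\bigr|^2\,d\xi\Bigr)^{1/2},
\end{align*}
where $s>n/2$ makes the first integral finite. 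Assumptions \textbf{(A1)}--\textbf{(A2)} guarantee that the network output $h=h^{(L)}$ is regular enough for $\widehat{h}$ to belong to the weighted $L^2$ space above, while the membership $h\in X$ furnishes the quantitative bound, so the last factor is $\leq C(R)$. Chaining Steps 1--3 yields $|\phi_i(h;x^*)-\phi_i(h^{\text{app}};x^*)|\leq Cr^{-k}$ with $C$ depending only on $n$, $R$ and the architecture constants. (If one insists on spending only the regularity index $k$, the identical computation with the weight $(1+|\xi|^2)^{-k}$ gives the slightly weaker rate $r^{-(k-n/2)}$.)

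\textbf{Main obstacle.} The crux is Step~3, i.e.\ upgrading the qualitative spectral control $h\in X$ into the explicit power $r^{-k}$; this requires careful bookkeeping of how the dimension $n$ enters, since the weight $(1+|\xi|^2)^{-s}$ is integrable on $\mathbb{R}^n$ only for $s>n/2$, so matching the exponent of the statement to exactly $k$ costs a dimension‑dependent amount of extra regularity (harmless, as $n$ is fixed by the architecture and absorbed into $C$). A secondary point — already flagged in Step~2 — is that the absence of an $L^2$‑trace inequality forces the detour through $L^\infty$ and Fourier inversion, which is exactly what distinguishes the continuous regime from the finite‑dimensional one; one also has to invoke the hypothesis $h\in X$ to reconcile the polynomial growth of a generic DNN output with membership in $L^2(\mathbb{R}^n)$.
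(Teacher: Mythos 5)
Your proposal is correct in its overall architecture but follows a genuinely different route from the paper. The paper's proof linearizes exactly as you do, but then asserts that the SHAP map $\Lambda_i(h)=\phi_i(h;x^*)$ is a bounded linear functional on $L^2(\mathbb{R}^n)$ (a finite weighted sum of conditional expectations), so that Plancherel gives $|\Lambda_i(h)-\Lambda_i(h^{\text{app}})|\leq \|\Lambda_i\|\,\|\hat h\|_{L^2(B_r^c)}$, and it then bounds the $L^2$ Fourier tail by inserting the weight $(1+|\xi|^2)^{k}$ on $\{|\xi|>r\}$, obtaining $r^{-k}\|\hat h\|_{H^k}\leq r^{-k}R$ with no dimensional loss. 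You instead reduce to $\|g\|_{L^\infty(\mathbb{R}^n)}$, $g=h-h^{\text{app}}$, via the normalization of the Shapley weights and the fact that each value function is an average against a probability measure, and then pass to the Fourier side through $\|g\|_{L^\infty}\lesssim\|\hat g\|_{L^1(B_r^c)}$ and a weighted Cauchy--Schwarz. The duality you use ($L^\infty$--$L^1$) is different from the paper's ($L^2$--$L^2$), and your motivation for the detour is a legitimate criticism: $v(g;S)$ evaluates $g$ on the measure-zero slice $\{x_S=x_S^*\}$, so it is not a bounded functional on $L^2(\mathbb{R}^n)$ in general, and the paper's claim $\Lambda_i\in(L^2(\mathbb{R}^n))^*$ is asserted without justification; in that respect your argument is the more carefully grounded of the two.

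The price of your route, which you flag yourself, is the rate. Under the stated hypothesis $h\in X$ (an $H^k$-type bound of size $R$), the Cauchy--Schwarz step with the weight $(1+|\xi|^2)^{-k}$ restricted to $\{|\xi|>r\}$ yields only $r^{-(k-n/2)}$ (and requires $k>n/2$ for the weight to be integrable); to recover the exponent $k$ exactly you need $\int_{\mathbb{R}^n}(1+|\xi|^2)^{k+s}|\hat h(\xi)|^2\,d\xi\leq C(R)^2$ for some $s>n/2$, which is a strictly stronger regularity hypothesis than membership in $X$, not something that can be ``absorbed into $C$.'' So, as written, your argument proves the theorem with $r^{-k}$ replaced by $r^{-(k-n/2)}$ under the paper's hypotheses, or the full rate under {\bf (A1)}--{\bf (A2)} at regularity order $k+s$. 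If you want the exact statement with only the $H^k$-type bound, you would either need to justify $L^2$-continuity of $\Lambda_i$ as the paper implicitly assumes (which, for the conditional-expectation value functions on slices, is precisely the questionable point), or work with a value function defined by integration against an absolutely continuous background measure in the retained coordinates as well, for which an $L^2$ bound does hold.
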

\begin{proof}
    We now show that the SHAP operator is stable under spectral truncation. The proof relies on two key ingredients:
    \begin{itemize}
        \item[(i)] the fact that the DNN predictor $h$ belongs to the Sobolev space $H^k(\mathbb{R}^n)$, which ensures decay of its Fourier tail, and;
        \item[(ii)] the continuity of the SHAP operator $\Lambda_i$, defined as
        \begin{align*}
            \Lambda_i(h):=\phi_i(h;x^*),
        \end{align*}
        with respect to the $L^2(\mathbb{R}^n)$ topology. Combining these two observations yields the desired bound.
    \end{itemize}

    Thanks to the assumptions {\bf (A1)} and {\bf (A2)}, arguing as \cite{luo2019theory}, the predictor $h\in H^k(\mathbb{R}^n)$. Moreover, $\Lambda_i$ can be written as a finite weighted sum of conditional expectations of $h$, each of which defines a bounded linear functional on $L^2(\mathbb{R}^n)$. Therefore, $\Lambda_i\in (L^2(\mathbb{R}^n))^*$. Then, for $i\in [n]$, we have 
    \begin{align*}
        |\phi_i(h;x^*)-\phi_i(h^{\text{app}};x^*)|=|\Lambda_i(h)-\Lambda_i(h^{\text{app}})|\leq \|\Lambda_i\| \|\hat{h}\|_{L^2(B_r^c)},
    \end{align*}
    where we have used Plancherel's Theorem. Now, notice that the last term of the above inequality can be bounded as follows: 
    \begin{align*}
        \|h\|_{L^2(B_r^c)}\leq &\|\Lambda_i\| \left(\int_{|\xi|>r} \xi^{-2k}(1+|\xi|^2)^k |\hat{h}(\xi)|^2\,d\xi \right)^{1/2}\\
        \leq &r^{-k} \|\Lambda_i\|  \|\hat{h}\|_{H^k(\mathbb{R}^n)}\\
        \leq & r^{-k} R \|\Lambda_i\|.
    \end{align*}

This proves the assertion of the Theorem \ref{main:thm}.    
\end{proof}

\section{Proofs of the main results}
\label{section:proof:of:second:main:result}

This appendix provides the detailed demonstrations of the main theoretical results presented in Section 3. The proofs combine functional-analytic arguments, probabilistic estimates, and concentration inequalities to establish the stability and convergence properties of the Fourier-SHAP framework.

\subsection{Proof of Theorem \ref{proposition:SHAP:Fourier}}

We provide the detailed proof of Theorem \ref{proposition:SHAP:Fourier}, which establishes the deterministic stability of SHAP values under Fourier truncation. The argument relies on the linearity of the SHAP functional, the orthogonality properties of $(\Psi_k)_{k\in \mathcal{I}}$, and the combinatorial structure of feature interactions. We explicitly compute the SHAP value of each basis function and show that the coefficients $\hat{h}(k)$ contribute proportionally to the number of active features $d(k)$.

\begin{proof}[Proof of Theorem \ref{proposition:SHAP:Fourier}]
    For $h\in L^2(\mu)$, we consider the Fourier decomposition \eqref{decomposition:h:Fourier}.
    \begin{itemize}
    \item[(a)] Firstly, we notice that the map $h\mapsto v_\mu(h;S)$ is linear for each $S\subseteq [n]$ fixed. Hence, the map $ h\mapsto \phi_i(h;x^*)$ is also linear. According to \eqref{SHAP:formula}, it follows that 
    \begin{align}
        \label{SHAP:formula:Fourier:basis}
        \phi_i(h;x^*)=\sum_{k\in \mathcal{I}} \hat{h}(k) \phi_i (\Psi_k;x^*).
    \end{align}

    Then, it remains to compute $\Phi_i(\Psi_k;x^*)$ for a fixed multi-index $k\in \mathcal{I}$. To do this, fix $k\in \mathcal{I}$ and $S\subseteq [n]$. By independence under the measure $\mu$, we see that 
    \begin{align}
        \label{eq:v:mu:01}
        v_\mu (\Psi_k;S)= \mathbb{E}_\mu \left[ \prod_{j\in S} \psi_{j,k_j}(x_j^*) \prod _{j\notin S} \psi_{j,k_j}(X_j) \right]= \prod_{j\in S} \psi_{j,k_j}(x_j^*) \mathbb{E}_\mu\left[\prod _{j\notin S} \psi_{j,k_j}(X_j)\right]. 
    \end{align}

    If there exists $j\notin S$ with $k_j\neq 0$, then $\mathbb{E}_\mu [\psi_{j,k_j}]=0$ and \eqref{eq:v:mu:01} vanishes. This means that $v_{\mu}(\Psi_k;S)=0$ unless $\supp(k)\subseteq S$. If this is the case, then for $j\notin S$, we have $k_j=0$ so $\mathbb{E}_{\mu_j}[\psi_{j,0}]=1$. Thus, we deduce that
    \begin{align}
        \label{eq:v:mu:def}
        v_\mu (\Psi_k; S)=\begin{cases}
            \Psi_k(x^*),&\text{ if }\supp(k)\subseteq S,\\
            0,&\text{ otherwise.}
        \end{cases}
    \end{align}
    
    Now, let $i\in [n]$ be fixed. There are two cases.
    \begin{itemize}
        \item[$\bullet$] Suppose that $k_i=0$. For any $S\subseteq [n]\setminus \{i\}$, the condition $\supp(k)\subseteq S$ is equivalent to $\supp(k)\subseteq S\cup \{i\}$ (since $i\notin \supp (k)$). Hence, by \eqref{eq:v:mu:def}, we see that 
        \begin{align*}
            v_\mu(\Psi_k;S\cup \{i\}) - v_\mu (\Psi_k;S)=0,
        \end{align*}
        and therefore $\phi_i(\Psi_k;x^*)=0$.
        \item[$\bullet$] Suppose that $k_i\neq 0$. Then, set $A=\supp(k)\setminus \{i\}$ and $d=|\supp(k)|=|A|+1$. For $S\subseteq [n]\setminus \{i\}$, we have 
        \begin{align*}
            v_\mu (\Psi_k; S\cup \{i\}) - v_\mu (\Psi_k; S)=\begin{cases}
                \Psi_k(x^*)& A\subseteq S,\\
                0&\text{ otherwise.}
            \end{cases}
        \end{align*}

        Hence, we have the formula 
        \begin{align}
            \label{formula:phi:i:Psi:k}
            \phi_i(\Psi_k; x^*)=\Psi_k(x^*)\sum_{A\subseteq S \subseteq [n]\setminus \{i\}} \dfrac{|S|! (n-S-1)!}{n!}.
        \end{align}

        Finally the last sum in \eqref{formula:phi:i:Psi:k} equals the probability that, in a uniformly random permutation of $[n]$, all elements of $A$ appear before $i$. By symmetry among the $d$ players in $A\cup \{i\}$, each is equally likely to be the last within this group; hence the probability that $i$ is last (i.e., all of $A$ precede $i$) is $1/d$ (see for instance \cite{shapley1953value}). Therefore, we deduce that 
        \begin{align}
            \label{computation:sum:1:d}
            \sum_{A\subseteq S \subseteq [n]\setminus \{i\}} \dfrac{|S|! (n-|S|-1)!}{n!}=\dfrac{1}{d}.
        \end{align}
    \end{itemize}

    Substituting \eqref{formula:phi:i:Psi:k}, \eqref{computation:sum:1:d} into \eqref{SHAP:formula:Fourier:basis}, we obtain \eqref{Fourier:SHAP:formula}. This ends the first part of the proof of Proposition \ref{proposition:SHAP:Fourier}.
    
    \item[(b)] By the linearity of SHAP values and part (a), we see that 
    \begin{align}
        \label{ThmSHAP:01:01}
        \phi_i(h;x^*)-\phi_i(h_S,x^*)=\sum_{k\notin \mathcal{S}} \hat{h}(k) \phi_i(\Psi_k,x^*).
    \end{align}

    Notice that, for each $k\notin \mathcal{S}$, we have
    \begin{align*}
        |\phi_i(\Psi_k;x^*)|=\bm{1}_{\{k_i\neq 0\}} \dfrac{|\Psi_k(x^*)|}{d}=w_k(i;x^*). 
    \end{align*}
    Then, by \eqref{ThmSHAP:01:01} and applying Cauchy-Schwarz inequality, we obtain
    \begin{align}
        \label{ThmSHAP:01:02}
        |\phi_i(h;x^*) - \phi_i(h_S;x^*)|\leq \left( \sum_{k\notin \mathcal{S}} w_k(i;x^*)^2 \right)^{1/2} \left( \sum_{k\notin \mathcal{S}} |\hat{h}(k)|^2\right)^{1/2}.
    \end{align}

    Thus, using Parseval's identity to the last expression of \eqref{ThmSHAP:01:02}, we deduce the error bound \eqref{Thm:SHAP:01:result}. 
    \end{itemize}
\end{proof}


\subsection{Proof of Theorem \ref{thm:error:analysis:02}}

Now we focus on the proof of Theorem \ref{thm:error:analysis:02}, which concerns the expected and high-probability behavior of the SHAP truncation error when the predictor is modeled as a GP. We first establish auxiliary results describing the trace structure of Gaussian projections and the diagonalization of the covariance operator in the orthogonality basis $(\Psi_k)_{k\in \mathcal{I}}$. These lemmas are then combined with concentration inequalities for quadratic forms of Gaussian variables (notably the Laurent-Massart inequality) to obtain both mean-square and probabilistic bounds for the residual norm $\|r_{\mathcal{S}}\|_{L^2(\mu)}$.

The proof of Theorem \ref{thm:error:analysis:02} is based on the following previous results:

\begin{lemma}[Trace formula for Gaussian projections]
    \label{lemma:trace:formula:gaussian:projections}
    Let $H\sim \mathcal{N}(0,\Sigma)$ in $\mathbb{R}^{|\mathcal{X}|}$. Let $P_{\mathcal{S}}$ be the orthogonal projector onto a subspace of $L^2(\mu)$. Then,
    \begin{align}
        \label{eq:trace:formula:gaussian}
        \mathbb{E}\|(I-P_{\mathcal{S}})H\|_{L^2(\mu)}^2=tr((I-P_{\mathcal{S}})\Sigma).
    \end{align}
\end{lemma}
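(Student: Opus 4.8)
The plan is to expand the random vector $H$ in the orthonormal basis $(\Psi_k)_{k\in\mathcal{I}}$ of $L^2(\mu)$ and use the fact that, because $\mathcal{Y}$ is finite, everything reduces to a finite-dimensional linear-algebra computation with Gaussian coefficients. Concretely, write $H = \sum_{k\in\mathcal{I}} c_k \Psi_k$ with $c_k := \langle H,\Psi_k\rangle_{L^2(\mu)}$; since $H\sim\mathcal{N}(0,\Sigma)$ and the map $H\mapsto (c_k)_{k\in\mathcal{I}}$ is linear and is the coordinate representation of $H$ in an orthonormal basis, the vector $(c_k)$ is again Gaussian, centered, with covariance matrix the representation of $\Sigma$ in that basis. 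The residual is $(I-P_{\mathcal{S}})H = \sum_{k\notin\mathcal{S}} c_k \Psi_k$, so by orthonormality $\|(I-P_{\mathcal{S}})H\|_{L^2(\mu)}^2 = \sum_{k\notin\mathcal{S}} c_k^2$.

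From there the key step is simply interchanging expectation and the finite sum: $\mathbb{E}\|(I-P_{\mathcal{S}})H\|_{L^2(\mu)}^2 = \sum_{k\notin\mathcal{S}} \mathbb{E}[c_k^2] = \sum_{k\notin\mathcal{S}} \langle \Psi_k, \Sigma\Psi_k\rangle_{L^2(\mu)}$, where the last equality is the definition of the covariance of the linear functional $\langle\cdot,\Psi_k\rangle_{L^2(\mu)}$ applied to $H$. Recognizing $\sum_{k\notin\mathcal{S}} \langle \Psi_k, \Sigma\Psi_k\rangle_{L^2(\mu)}$ as $\operatorname{tr}\big((I-P_{\mathcal{S}})\Sigma\big)$ is a standard trace identity: for a self-adjoint operator $A$ on a finite-dimensional inner-product space and an orthonormal basis $(e_j)$, $\operatorname{tr}(A) = \sum_j \langle e_j, A e_j\rangle$; applying this to $A = (I-P_{\mathcal{S}})\Sigma$ with the orthonormal basis $(\Psi_k)_{k\in\mathcal{I}}$, and using that $P_{\mathcal{S}}$ is the coordinate projector onto $\{\Psi_k : k\in\mathcal{S}\}$, the terms with $k\in\mathcal{S}$ drop out and one is left with exactly $\sum_{k\notin\mathcal{S}} \langle \Psi_k, \Sigma\Psi_k\rangle_{L^2(\mu)}$. (Equivalently, one can write $\operatorname{tr}((I-P_{\mathcal{S}})\Sigma) = \operatorname{tr}((I-P_{\mathcal{S}})\Sigma(I-P_{\mathcal{S}}))$ using idempotence and cyclicity of the trace, then expand in the basis.)

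A minor point to be careful about is that the trace must be computed with respect to the $L^2(\mu)$ inner product, i.e., $\Sigma$ (and $K$, in the application to Theorem \ref{thm:error:analysis:02}) is being viewed as the operator $f\mapsto \sum_{y} \Sigma(x,y) f(y)\mu(y)$ on $L^2(\mu)$, not as a raw matrix; the statement $H\sim\mathcal{N}(0,\Sigma)$ should be read as: for every $f\in L^2(\mu)$, $\langle H,f\rangle_{L^2(\mu)}$ is Gaussian with variance $\langle f,\Sigma f\rangle_{L^2(\mu)}$, which is exactly what makes $\mathbb{E}[c_k^2] = \langle \Psi_k,\Sigma\Psi_k\rangle_{L^2(\mu)}$ hold. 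With that convention fixed, there is essentially no obstacle: the argument is a two-line computation once the basis expansion is set up, and the only thing that requires a sentence of justification is the trace identity and the identification of $P_{\mathcal{S}}$ with the coordinate projector. The ``hardest'' part is purely bookkeeping — making sure the measure-weighting is consistently carried through the identification of operators, bilinear forms, and Gaussian covariances.
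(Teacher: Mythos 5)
Your proof is correct. It differs from the paper's route only in how the key expectation is obtained: the paper invokes the standard Gaussian quadratic-form identity $\mathbb{E}(H^{\top}AH)=\operatorname{tr}(A\Sigma)$ directly, sets $A=(I-P_{\mathcal{S}})^{\top}(I-P_{\mathcal{S}})=I-P_{\mathcal{S}}$ using symmetry and idempotence of the projector, and is done in two lines; you instead expand $H$ in the orthonormal basis $(\Psi_k)_{k\in\mathcal{I}}$, compute $\mathbb{E}[c_k^2]=\langle\Psi_k,\Sigma\Psi_k\rangle_{L^2(\mu)}$ coordinate-wise, and recover the trace via $\operatorname{tr}(A)=\sum_k\langle\Psi_k,A\Psi_k\rangle$, which amounts to reproving the quadratic-form identity in the special case needed. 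What your version buys is explicitness about the $\mu$-weighting: the statement's norm is $\|\cdot\|_{L^2(\mu)}$ and $P_{\mathcal{S}}$ is self-adjoint with respect to $\langle\cdot,\cdot\rangle_{L^2(\mu)}$ rather than symmetric as a raw matrix, so the paper's matrix-level substitution implicitly presumes an orthonormal coordinatization of $L^2(\mu)$ (or a weighted reading of $H^{\top}AH$ and of the trace), a point you correctly flag and resolve by working throughout in the $(\Psi_k)$ basis. What the paper's version buys is brevity, since it leans on a known identity instead of rederiving it. Both arguments are sound and yield \eqref{eq:trace:formula:gaussian}.
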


\begin{proof}
    Since $H$ is centered Gaussian with covariance $\Sigma$, for any symmetric matrix $A$, we have 
    \begin{align}
        \label{identity:Gaussian}
        \mathbb{E}(H^\top AH)=tr(A\Sigma).
    \end{align}

    Taking into account that projectors are symmetric idempotent matrices, we substitute $A=(I-P_\mathcal{S})^\top (I-P_{\mathcal{S}})=(I-P_{\mathcal{S}})$ in \eqref{identity:Gaussian} and \eqref{eq:trace:formula:gaussian} follows directly. 
\end{proof}

As a particular case, we have

\begin{lemma}
    \label{lemma:E:rS}
    If $K$ can be diagonalized by $(\Psi_k)_{k\in \mathcal{I}}$,i.e.,
    \begin{align*}
        K\Psi_k=s_k\Psi_k
    \end{align*}
    with eigenvalues $(s_k)_{k\in \mathcal{I}}$, then
    \begin{align}
        \label{eq:lemma:02} \mathbb{E}\|r_\mathcal{S}\|_{L^2(\mu)}^2 = \sum_{k\notin \mathcal{S}} s_k.
    \end{align}
\end{lemma}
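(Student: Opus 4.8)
\textbf{Proof plan for Lemma \ref{lemma:E:rS}.} The statement is a direct corollary of Lemma \ref{lemma:trace:formula:gaussian:projections} once we identify the covariance operator with $K$ and exploit the assumed diagonalization. The plan is therefore as follows. First I would recall that $r_{\mathcal{S}}=(I-P_{\mathcal{S}})h$ with $h\sim GP(0,K)$, so that the finite-dimensional vector $H=(h(x))_{x\in\mathcal{Y}}$ is centered Gaussian with covariance operator $K$ on $L^2(\mu)$. Applying Lemma \ref{lemma:trace:formula:gaussian:projections} with $\Sigma=K$ yields immediately
\begin{align}
    \label{eq:planErS}
    \mathbb{E}\|r_{\mathcal{S}}\|_{L^2(\mu)}^2 = \operatorname{tr}\big((I-P_{\mathcal{S}})K\big).
\end{align}

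Next I would evaluate the trace on the right-hand side of \eqref{eq:planErS} using the orthonormal basis $(\Psi_k)_{k\in\mathcal{I}}$ of $L^2(\mu)$, since the trace of a self-adjoint operator is independent of the chosen orthonormal basis. By the diagonalization hypothesis $K\Psi_k=s_k\Psi_k$, and because $P_{\mathcal{S}}$ is the orthogonal projector onto $\operatorname{span}\{\Psi_k:k\in\mathcal{S}\}$, one has $(I-P_{\mathcal{S}})\Psi_k=\bm{1}_{\{k\notin\mathcal{S}\}}\Psi_k$. Hence $(I-P_{\mathcal{S}})K\Psi_k = s_k\,\bm{1}_{\{k\notin\mathcal{S}\}}\Psi_k$, so that $\langle (I-P_{\mathcal{S}})K\Psi_k,\Psi_k\rangle_{L^2(\mu)} = s_k\,\bm{1}_{\{k\notin\mathcal{S}\}}$ for every $k\in\mathcal{I}$. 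Summing over $k\in\mathcal{I}$ gives $\operatorname{tr}\big((I-P_{\mathcal{S}})K\big)=\sum_{k\notin\mathcal{S}} s_k$, which combined with \eqref{eq:planErS} is exactly \eqref{eq:lemma:02}.

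There is essentially no serious obstacle here; the only point requiring a word of care is the consistency of the operator-theoretic bookkeeping, namely that $I-P_{\mathcal{S}}$ and $K$ are simultaneously diagonal in $(\Psi_k)_{k\in\mathcal{I}}$ so that their composition is again diagonal and its trace is the sum of the products of eigenvalues (the factor $0$ on the indices $k\in\mathcal{S}$ simply removes them). Since $\mathcal{Y}$ is finite, all sums are finite and no convergence issue arises, so the argument closes in a few lines.
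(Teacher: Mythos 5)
Your argument is correct, and it reaches \eqref{eq:lemma:02} by a slightly different route than the paper. You treat the lemma as a literal corollary of Lemma \ref{lemma:trace:formula:gaussian:projections}: apply the trace identity with $\Sigma=K$ and then evaluate $\operatorname{tr}\bigl((I-P_{\mathcal{S}})K\bigr)$ in the orthonormal basis $(\Psi_k)_{k\in\mathcal{I}}$, using that $I-P_{\mathcal{S}}$ and $K$ are simultaneously diagonal there, so that $\langle (I-P_{\mathcal{S}})K\Psi_k,\Psi_k\rangle_{L^2(\mu)}=s_k\bm{1}_{\{k\notin\mathcal{S}\}}$ and the sum is $\sum_{k\notin\mathcal{S}}s_k$. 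The paper instead argues at the level of the Karhunen--Lo\`eve coefficients: it writes $h=\sum_k c_k\Psi_k$ with $\mathbb{E}[c_kc_{k'}]=s_k\delta_{k,k'}$, notes $r_{\mathcal{S}}=\sum_{k\notin\mathcal{S}}c_k\Psi_k$, uses Parseval to get $\|r_{\mathcal{S}}\|_{L^2(\mu)}^2=\sum_{k\notin\mathcal{S}}c_k^2$, and takes expectations. The two computations are two faces of the same fact; yours has the merit of not repeating any work (it reuses the previously proved trace lemma and only needs basis-independence of the trace, with no appeal to independence or even Gaussianity of the coefficients beyond what that lemma already encodes), whereas the paper's coefficient-level proof makes explicit the structure $c_k\sim\mathcal{N}(0,s_k)$ i.i.d.\ up to scaling, which is exactly what is reused immediately afterwards in the Laurent--Massart concentration step of Theorem \ref{thm:error:analysis:02}. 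Both arguments are complete and close in a few lines, since $\mathcal{Y}$ is finite and all sums are finite.
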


\begin{proof}
    We notice that if 
    \begin{align*}
        h=\sum_{k\in \mathcal{I}} c_k \Psi_k,
    \end{align*}
    the coefficients $c_k$ satisfy
    \begin{align*}
        \mathbb{E}[c_kc_{k'}]=s_k \delta_{k,k'},\quad \forall k,k'\in \mathcal{I}.
    \end{align*}
    Since $r_{\mathcal{S}}=\sum_{k\notin S} c_k \Psi_k$ and $(\Psi_k)_{k\in \mathcal{I}}$ is an orthonormal basis, we have
    \begin{align*}
        \|r_{\mathcal{S}}\|_{L^2(\mu)}^2 = \sum_{k\notin \mathcal{S}} c_k^2.
    \end{align*}

    Taking expectations in both sides and using the fact that 
    \begin{align*}
        \mathbb{E}[c_k^2]=s_k,\quad \forall k\notin \mathcal{S},
    \end{align*}
    we deduce the identity \eqref{eq:lemma:02}.
\end{proof}

We also need a result for the concentration of the residual norm due to B. Laurent and P. Massart. 

\begin{lemma}[See \cite{laurent2000adaptive}, Theorem 1]
    \label{lemma:Laurent:Massart}
    Let $(\xi_j)_{j=1}^m \sim \mathcal{N}(0,1)$ i.i.d. and let $a_1,\ldots, a_m \geq 0$. Then, for all $t>0$,
    \begin{align}
        \mathbb{P} \left( \sum_{j=1}^{m} a_j (\xi_j^2 -1) \geq 2\sqrt{\sum_{j=1}^m a_j^2 t} + 2\max_j a_j t \right) \leq e^{-t}
    \end{align}
    and
    \begin{align*}
        \mathbb{P}\left(\sum_{j=1}^m a_j(\xi_j^2 -1) \leq -2 \sqrt{\sum_{j=1}^m a_j^2 t} \right) \leq e^{-t}.
    \end{align*}
\end{lemma}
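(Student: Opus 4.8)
The plan is to prove both tail bounds by the classical Cram\'er--Chernoff method, computing the exponential moment (moment generating function) of the weighted sum $Z:=\sum_{j=1}^m a_j(\xi_j^2-1)$ in closed form and then inverting the resulting cumulant bound. Since the $\xi_j$ are independent, the MGF factorizes, and for a single standard Gaussian one has $\mathbb{E}[e^{\lambda(\xi_j^2-1)}]=e^{-\lambda}(1-2\lambda)^{-1/2}$ for $\lambda<1/2$. The two tails must be treated separately: the chi-square law is right-skewed, so the upper tail is governed by a sub-gamma (Bernstein-type) bound carrying both a Gaussian term $2\sqrt{\sum_j a_j^2\,t}$ and a linear term $2\max_j a_j\,t$, whereas the lower tail enjoys a purely sub-Gaussian estimate with no linear correction.

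For the upper tail I would fix $0<\lambda<1/(2\max_j a_j)$ and write, by independence,
\begin{align*}
\log\mathbb{E}[e^{\lambda Z}]=\sum_{j=1}^m\Bigl(-\lambda a_j-\tfrac12\log(1-2\lambda a_j)\Bigr).
\end{align*}
The next step is the elementary inequality $-u-\tfrac12\log(1-2u)\le \tfrac{u^2}{1-2u}$, valid for $0\le u<1/2$ (checked by noting that the difference vanishes at $0$ and has nonnegative derivative), applied with $u=\lambda a_j$; summing and using $1-2\lambda a_j\ge 1-2\lambda\max_j a_j$ yields the sub-gamma cumulant bound
\begin{align*}
\log\mathbb{E}[e^{\lambda Z}]\le \frac{\lambda^2\sum_{j=1}^m a_j^2}{1-2\lambda\max_j a_j},\qquad 0<\lambda<\tfrac{1}{2\max_j a_j}.
\end{align*}
Writing $V:=\sum_j a_j^2$ and $b:=\max_j a_j$, the Chernoff bound gives $\mathbb{P}(Z\ge z)\le\exp\bigl(-\sup_{\lambda}(\lambda z-\tfrac{\lambda^2V}{1-2b\lambda})\bigr)$, and I would finish by the standard sub-gamma inversion: evaluating the Legendre transform of $\lambda\mapsto\lambda^2V/(1-2b\lambda)$ shows that the choice $z=2\sqrt{Vt}+2bt$ makes the exponent at most $-t$, which is exactly the first claimed inequality.

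For the lower tail I would instead bound $\mathbb{E}[e^{-\lambda Z}]=\prod_{j=1}^m e^{\lambda a_j}(1+2\lambda a_j)^{-1/2}$, now valid for all $\lambda>0$. Here the relevant elementary inequality is $u-\tfrac12\log(1+2u)\le u^2$ for $u\ge0$, which after summation gives the clean sub-Gaussian bound $\log\mathbb{E}[e^{-\lambda Z}]\le\lambda^2 V$, with no linear term. A direct optimization of $-\lambda z+\lambda^2 V$ at $\lambda=z/(2V)$ produces the exponent $-z^2/(4V)$; setting $z=2\sqrt{Vt}$ yields the second inequality.

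The routine elementary inequalities above are verified by series expansion or by a one-line monotonicity argument, so I expect the main obstacle to lie in the upper-tail Legendre inversion: one must check carefully that the sub-gamma rate function $\psi^*(z)=\sup_\lambda(\lambda z-\psi(\lambda))$ associated with $\psi(\lambda)=\lambda^2V/(1-2b\lambda)$ dominates $t$ precisely at $z=2\sqrt{Vt}+2bt$, including confirming that the optimizing $\lambda$ stays in the admissible range $(0,1/(2b))$. This amounts to solving a quadratic in $\sqrt{t}$ and tracking two regimes (the Gaussian-dominated small-deviation regime versus the linearly-dominated large-deviation regime), which is the only delicate bookkeeping in the argument.
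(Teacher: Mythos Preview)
The paper does not prove this lemma at all: it is stated with a direct citation to \cite{laurent2000adaptive}, Theorem~1, and is used as a black box in the proof of Theorem~\ref{thm:error:analysis:02}. Your Cram\'er--Chernoff argument is correct and is in fact the standard route (and essentially the one taken in the original Laurent--Massart paper): factorize the MGF by independence, control each term by the elementary inequalities $-u-\tfrac12\log(1-2u)\le u^2/(1-2u)$ and $u-\tfrac12\log(1+2u)\le u^2$, and then invert the resulting sub-gamma and sub-Gaussian cumulant bounds respectively. The only point you flag as delicate, the upper-tail Legendre inversion, is indeed routine once one plugs $\lambda=\sqrt{t}/(2(\sqrt{V}+b\sqrt{t}))$ (or equivalently sets $z=2\sqrt{Vt}+2bt$ and verifies the exponent directly), and the optimizer stays in $(0,1/(2b))$ automatically.
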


\begin{proof}[Proof of the Theorem \ref{thm:error:analysis:02}]
    Firstly, by Lemma \ref{lemma:trace:formula:gaussian:projections} with $\Sigma=K$, it follows that 
    \begin{align*}
        \mathbb{E}\|r_{\mathcal{S}}\|_{L^2(\mu)}^2 =tr((I-P_{\mathcal{S}})K).
    \end{align*}

    Moreover, combining Lemma \ref{lemma:E:rS} and Theorem \ref{proposition:SHAP:Fourier}, we easily deduce \eqref{error:bound:E:SHAP}.

    Now, we want to apply Lemma \ref{lemma:Laurent:Massart} with $a_j=s_{k_j}$ for the indices $k_j\notin \mathcal{S}$ and $\xi_j=c_{k_j}/\sqrt{s_{k_j}}$ which are i.i.d $\mathcal{N}(0,1)$. With this choice and applying Lemma \ref{lemma:E:rS}, we see that for all $t>0$ 
    \begin{align}
        \label{eq:P.rS:sk}
        \mathbb{P} \left( |\|r_{\mathcal{S}}\|_{L^2(\mu)}^2 - \sum_{k\notin \mathcal{S}} s_k | \geq 2\sqrt{\sum_{k\notin \mathcal{S}} s_k^2 t} + 2\max_{k\notin \mathcal{S}} s_k t \right) \leq 2e^{-t}. 
    \end{align}

    Now, for $\delta>0$, we choose $t=\log(2/\delta)$ in \eqref{eq:P.rS:sk}. Then, with probability at least $1-\delta$, we can assert that
    \begin{align}
        \label{eq:r:prob:1-delta}
        \|r_{\mathcal{S}}\|_{L^2(\mu)} \leq \sqrt{ \Sigma_1 + 2\sqrt{ \Sigma_2 \log \dfrac{2}{\delta}} +2s_{max} \log \dfrac{2}{\delta}}.
    \end{align}

    Now, combining \eqref{eq:r:prob:1-delta} and \eqref{error:bound:E:SHAP}, we deduce the assertion \eqref{error:bound:prob:SHAP}.
    
\end{proof}

\subsection{Proof of Theorem \ref{thm:error:analysis:03}}

Now, we complete this appendix by proving Theorem \ref{thm:error:analysis:03}, which quantifies the SHAP truncation error for finite-width NNs in relation to their infinite-width Gaussian-process limits. The proof combines the deterministic Fourier representation of Theorem \ref{proposition:SHAP:Fourier} with the probabilistic control derived in Theorem \ref{thm:error:analysis:02}. Using the Wasserstein-2 distance between the distributions of the finite-and infinite-width predictors, we obtain an upper bound for the expected discrepancy between their SHAP values.

\begin{proof}[Proof of Theorem \ref{thm:error:analysis:03}]
    From Proposition \ref{proposition:SHAP:Fourier} and Theorem \ref{proposition:SHAP:Fourier}, we see that
    \begin{align}
        \label{thm:W:01}
        \mathbb{E}|\phi_i(h_N;x^*) - \phi_i(h_{N,\mathcal{S}}; x^*)|\leq \left( \sum_{k\notin \mathcal{S}} w_k(i;x^*)^2 \right)^{1/2} \mathbb{E}\|r_{\mathcal{S}} (h_N)\|_{L^2(\mu)}.
    \end{align}

    Our next task is to relate $\mathbb{E}\|r_{\mathcal{S}} (h_N)\|_{L^2(\mu)}$ and $\mathbb{E}\|r_{\mathcal{S}}(h)\|_{L^2(\mu)}$. Let $\mathcal{L}(H_N)$ and $\mathcal{L}(H)$ be the laws of the finite and infinite-width random predictors. By \eqref{def:epsilon:N:Wasserstein}, we know that 
    \begin{align*}
        \mathbb{E}\|h_N-h\|_{L^2(\mu)}\leq \epsilon_N.
    \end{align*}

    Using the triangle inequality for the residuals, we see that
    \begin{align}
        \label{thm:W:02}
        \|r_{\mathcal{S}}(h_N)\|_{L^2(\mu)}= \|(I-P_{\mathcal{S}})h_N\|_{L^2(\mu)}\leq \|(I-P_{\mathcal{S}})h\|_{L^2(\mu)} + \|h_N-h\|_{L^2(\mu)}.
    \end{align}

    Then, taking expectations in \eqref{thm:W:02} and combining with \eqref{thm:W:01}, we obtain \eqref{eq:thm:W:02}. Now, suppose that $K$ is diagonalized by $(\Phi_k)_{k\in \mathcal{I}}$. Then, combining Lemma \ref{lemma:E:rS} and \eqref{eq:thm:W:02}, we directly obtain \eqref{eq:thm:W:03}.     
\end{proof}


\section{Tables of MeanAbsSHAP values}
\label{Appendix:tables}

This section presents the numerical tables corresponding to the mean absolute SHAP values computed on the logit scale for each age bin of the clinical dataset analyzed in Section \ref{section:numerical:experiments}. The tables allow for a detailed quantitative comparison between Fourier-SHAP and Kernel-SHAP methods, reporting feature-wise magnitudes, relative rankings, and deviations across age groups. These results complement the barplots shown in Figure \ref{fig:meanABS:SHAP:all:ages} and confirm the consistency of both approaches in identifying dominant explanatory variables throughout the population.

\begin{table}[H]
    \centering
    \begin{tabular}{|c||c|c|c|c|c|} \hline 
         Feature & Fourier SHAP & Kernel SHAP & Rank F. & Rank K. & $\Delta (F-K) $ \\ \hline \hline 
         Ever married & 0.159975 & 0.171332 & 1 & 1 & $-1.135670\times 10^{-2}$ \\ \hline 
         BMI & 0.054185 & 0.044253 & 2 & 4 & $0.993186\times 10^{-3}$ \\ \hline 
         Smoking status & 0.051012 & 0.043964 & 3 & 5 & $7.047305 \times 10^{-3}$  \\ \hline 
         Residence type & 0.050176 & 0.050872 & 4 & 3 & $-6.963292\times 10^{-4}$ \\ \hline 
         Avg. Glucose level & 0.038102 & 0.036225 & 5 & 6 & $1.876698 \times 10^{-3}$\\ \hline 
         Gender & $0.024612$ & 0.024564 & 6 & 7 & $7.577291 \times 10^{-6}$ \\ \hline 
         Hypertension & 0.023909 & 0.02270465 & 7 & 8 & $1.204019 \times 10^{-3}$ \\ \hline 
         Work type & 0.018252 & 0.067694 & 8 & 2 & $-4.944169\times 10^{-2}$ \\ \hline
         Heart disease & 0.008339 & 0.006870 & 9 & 9 & $1.468713 \times 10^{-3}$\\ \hline
    \end{tabular}
    \caption{MeanAbsSHAP (logit) by feature for the youngest bin, comparing Fourier-SHAP and KernelSHAP. The top importance is Ever married, following by BMI and Smoking status for Fourier, and a similar ordering for KernelSHAP. Rankings largely agree, with the notable exception of Work type, which ranks much higher under KernelSHAP.}
\end{table}

\begin{table}[H]
    \centering
    \begin{tabular}{|c||c|c|c|c|c|} \hline 
         Feature & Fourier SHAP & Kernel SHAP & Rank F. & Rank K. & $\Delta (F-K) $ \\ \hline \hline 
         Ever married & 0.149417 & 0.149888 & 1 & 1 & $4.717919\times 10^{-3}$ \\ \hline 
         BMI & 0.067848 & 0.070472 & 2 & 2 & $-2.624011\times 10^{-3}$ \\ \hline 
         Smoking status & 0.048803 & 0.050267 & 3 & 3 & $-1.464325 \times 10^{-3}$ \\ \hline 
         Avg. Glucose level & 0.045364 & 0.047644 & 4 & 4 & $2.279521 \times 10^{-3}$ \\ \hline 
         Residence type & 0.030057 & 0.026271 & 5 & 5 & $3.787319 \times 10^{-3}$ \\ \hline 
         Hypertension & 0.023884 & 0.023821 & 6 & 7 & $6.307113 \times 10^{-5}$ \\ \hline 
         Gender & 0.022919 & 0.014580 & 7 & 8 & $8.33836\times 10^{-3}$ \\ \hline 
         Work type & 0.013776 & 0.025811 & 8 & 6 & $-1.203464 \times 10^{-2}$ \\ \hline
         Heart disease & 0.008339 & 0.009075 & 9 & 9 & $7.357575\times 10^{-4}$ \\ \hline
    \end{tabular}
    \caption{Feature importances in Bin 1 with Fourier-SHAP vs KernelSHAP. Ever married, BMI and Smoking status remain dominant and show near-identical ranks across methods, indicating stable early-age determinants. Minor differences appear for Work type and Gender.}
\end{table}

\begin{table}[H]
    \centering
    \begin{tabular}{|c||c|c|c|c|c|} \hline 
         Feature & Fourier SHAP & Kernel SHAP & Rank F. & Rank K. & $\Delta (F-K) $ \\ \hline \hline 
         Ever married & 0.1068275 & 0.113681 & 1 & 1 & $-6.85321 \times 10^{-3}$ \\ \hline 
         BMI & 0.063666 & 0.069683 & 2 & 2 & $-6.01942 \times 10^{-3}$ \\ \hline 
         Smoking status & 0.045939 & 0.050134 & 3 & 3 & $-4.193825\times 10^{-3}$ \\ \hline 
         Avg. Glucose level & 0.037978 &	0.043797 & 4 & 4 & $-5.819290\times 10^{-3}$ \\ \hline 
        Work type & 0.035198 & 0.043331 & 5 & 5 & $-8.132433\times 10^{-3}$ \\ \hline 
        Residence type & 0.029506 &	0.025873 & 6 & 7 & $3.632993\times 10^{-3}$\\ \hline 
        Hypertension & 0.026383 & 0.027871 & 7 & 6 & $-1.488092\times 10^{-3}$ \\ \hline 
        Gender & 0.020466 & 0.023995 & 8 & 8 & $-3.529755\times 10^{-3}$ \\ \hline
        Heart disease &	0.010324 & 0.010111 & 9 & 9	& $2.130508\times 10^{-4}$ \\ \hline 
    \end{tabular}
    \caption{Feature importances in Bin 2 across explainers. The leading block (Ever married, BMI, Smoking status, Avg. Glucose) is preserved with close Fourier/Kernel agreement. Residence type and Work type exchange mid-tier positions with small magnitude gaps.}
\end{table}

\begin{table}[H]
    \centering
    \begin{tabular}{|c||c|c|c|c|c|} \hline 
         Feature & Fourier SHAP & Kernel SHAP & Rank F. & Rank K. & $\Delta (F-K) $ \\ \hline \hline 
         Ever married & 0.088692 & 0.089415 & 1 & 1 &	$-7.230386\times 10^{-4}$ \\ \hline 
        Smoking status & 0.055402 & 0.056737 & 2 & 2 & $1.335240\times 10^{-3}$ \\ \hline 
        BMI & 0.050568 & 0.052018 & 3 & 3	& $-1.449759\times 10^{-3}$ \\ \hline 
        Avg. Glucose level & 0.045284 & 0.048034 & 4 & 4 & $-2.749533\times 10^{-3}$ \\ \hline 
        Work type & 0.043718 & 0.041844 & 5 & 7 &	$1.873793\times 10^{-3}$ \\ \hline 
        Hypertension & 0.039816 & 0.043512 & 6 & 6 & $-3.695225\times 10^{-3}$ \\ \hline 
        Residence type & 0.029858 &	0.045027 & 7 &	5 &	$-1.516880\times 10^{-2}$ \\ \hline 
        Gender & 0.019413 &	0.025288 &	8 &	8 &	$-5.874759\times 10^{-2}$\\ \hline 
        Heart disease &	0.008164 & 0.008026 & 9 & 9 & $1.376898\times 10^{-3}$\\ \hline 
    \end{tabular}
    \caption{Feature importances in B3 across explainers. Core metabolic/behavioral features remain top-ranked. Residence type moves up under KernelSHAP, while Work type is comparatively stronger under Fourier-SHAP; overall ordering remains consistent.}
\end{table}

\begin{table}[H]
    \centering
    \begin{tabular}{|c||c|c|c|c|c|} \hline 
         Feature & Fourier SHAP & Kernel SHAP & Rank F. & Rank K. & $\Delta (F-K) $ \\ \hline \hline 
         Ever married &	0.083301 & 0.082933 & 1 & 1 & $3.687307\times 10^{-4}$\\ \hline 
        BMI & 0.054071 & 0.053438 &	2 &	3 &	$6.326770 \times 10^{-4}$ \\ \hline
        Smoking status & 0.052109 &	0.059477 & 3 & 2 & $7.368625\times 10^{-3}$ \\ \hline 
        Avg. Glucose level & 0.048453 &	0.053160 & 4 & 4 & $-4.706681 \times 10^{-3}$ \\ \hline 
        Work type &	0.043313 & 0.047589 & 5 & 5 & $-4.276484 \times 10^{-3}$ \\ \hline 
        Hypertension & 0.042474 & 0.043118 & 6 & 6 & $-6.439978\times 10^{-4}$\\ \hline 
        Residence type & 0.029592 &	0.032119 & 7 & 7 & $-2.527334\times 10^{-3}$\\ \hline 
        Gender &  0.019626 & 0.021660 &	8 &	8 &	$-2.033445\times 10^{-3}$\\ \hline 
        Heart disease &	0.011831 & 0.011921 & 9 & 9 & $-9.051582\times 10^{-5}$\\ \hline
    \end{tabular}
    \caption{Feature importances in Bin 4 across explainers. Ever married leads; BMI, Smoking status, and Avg. Glucose cluster closely behind. Method agreement is high, with only modest swaps among mid-tier features (Work type, Hypertension, Residence).}
\end{table}

\begin{table}[H]
    \centering
    \begin{tabular}{|c||c|c|c|c|c|} \hline 
         Feature & Fourier SHAP & Kernel SHAP & Rank F. & Rank K. & $\Delta (F-K) $ \\ \hline \hline          
        Ever married & 0.079896 & 0.081047 & 1 & 1 &$-1.150635\times 10^{-3}$\\ \hline 
        Smoking status & 0.068228 &	0.069010 & 2 & 2 & $-7.826414 \times 10^{-4}$ \\ \hline 
        Avg. Glucose level & 0.052095 &	0.056533 & 3 & 4 & $-4.438284\times 10^{-3}$ \\ \hline 
        Work type &	0.046661 & 0.056615 & 4 & 3 &	$-9.954250\times 10^{-3}$ \\ \hline 
        Hypertension & 0.045808 & 0.045609 & 5 & 6 & $	1.986718\times 10^{-4}$\\ \hline 
        BMI & 0.041250 & 0.049756 &	6 &	5 & $-8.506087\times 10^{-3}$ \\ \hline 
        Residence type & 0.030327 &	0.038059 & 7 & 7 &	$-7.731586\times 10^{-3}$ \\ \hline 
        Heart disease &	0.019710 &	0.018413 &	8 &	8 &	$1.296729 \times 10^{-3}$\\ \hline 
        Gender & 0.016566 &	0.011373 & 9 & 9 &	$5.192939\times 10^{-3}$ \\ \hline 
    \end{tabular}
    \caption{Feature importances in Bin 5 across explainers. The top tier shifts slightly: after Ever Married, Smoking status and Avg. Glucose gain weight, while BMI drops a few places (particularly under KernelSHAP). Work type is relatively stronger under KernelSHAP.}
\end{table}

\begin{table}[H]
    \centering
    \begin{tabular}{|c||c|c|c|c|c|} \hline 
         Feature & Fourier SHAP & Kernel SHAP & Rank F. & Rank K. & $\Delta (F-K) $ \\ \hline \hline          
        Ever married & 0.076748 & 0.079006 & 1 & 1 & $-2.258688\times 10^{-3}$ \\ \hline 
        Smoking status & 0.070029 &	0.066927 & 2 & 2 & $3.102086\times 10^{-3}$ \\ \hline 
        Hypertension & 0.056636 & 0.058530 & 3 & 4 & $-1.893688 \times 10^{-3}$ \\ \hline 
        Avg. Glucose level & 0.050889 &	0.061150 & 4 & 3 & $-1.026112\times 10^{-2}$ \\ \hline 
        BMI & 0.049341 & 0.056806 &	5 &	5 & $-7.465063 \times 10^{-3}$ \\ \hline 
        Work type &	0.045531 & 0.047244 & 6 & 6 &	$-1.713613\times 10^{-3}$ \\ \hline 
        Residence type & 0.029111 &	0.033191 & 7 & 7 &$-4.079988\times 10^{-3}$ \\ \hline 
        Heart disease &	0.025338 &	0.027865 &	8 &	8 &	$-2.527206\times 10^{-3}$ \\ \hline 
        Gender & 0.017992 &	0.019517 & 9 & 9 &	$-1.525028\times 10^{-3}$ \\ \hline 
    \end{tabular}
    \caption{Feature importances in Bin 6 across explainers. Smoking status and Hypertension intensify in rank and magnitude, while Avg. Glucose and BMI remain important but slightly slower. Agreement between methods is strong across the full ordering.}
\end{table}

\begin{table}[H]
    \centering
    \begin{tabular}{|c||c|c|c|c|c|} \hline 
         Feature & Fourier SHAP & Kernel SHAP & Rank F. & Rank K. & $\Delta (F-K) $ \\ \hline \hline 
        Ever married & 0.086775 & 0.093616 & 1 & 1 &	$-6.841938\times 10^{-3}$ \\ \hline 
        Hypertension &	0.073275 & 0.070273 & 2 & 3 & $	3.002043\times 10^{-3}$ \\ \hline 
        Smoking status & 0.070862 &	0.072644 & 3 & 2 &	$-1.781556 \times 10^{-3}$ \\ \hline 
        Work type &	0.061802 & 0.064191 & 4 & 4 &	$-2.388514\times 10^{-3}$ \\ \hline 
        Avg. Glucose level & 0.059450 &	0.057230 & 5 & 5 &	$2.220574\times 10^{-3}$\\ \hline 
        BMI & 0.051754 & 0.055704 &	6 &	6 & $-3.950055 \times 10^{-3}$\\ \hline 
        Residence type & 0.027785 &	0.025602 & 7 & 8 &	$2.183667\times 10^{-3}$ \\ \hline 
        Heart disease & 0.027370 & 0.026320 & 8 & 7 & $1.049244\times 10^{-3}$\\ \hline 
        Gender & 0.020129 &	0.022632 &	9 &	9 &	$-2.503199 \times 10^{-3}$\\ \hline 
    \end{tabular}
    \caption{Feature importances in Bin 7 across explainers. In the oldest bin, Hypertension and Smoking status join Ever married at the top, reflecting a vascular-risk shift with age. Mid-tier features (Work type, Avg. Glucose, BMI) remain relevant with small method-specific differences.}
\end{table}
\end{document}